\theoremstyle{definition}
\theoremstyle{plain}
\newtheorem{thm}{Theorem}[section]
\newtheorem{lem}[thm]{Lemma}
 \newtheorem{preconj}{{\bf  Theorem}}
\newenvironment{conj}{\begin{preconj}{\hspace{-0.5
               em}{\bf }}}{\end{preconj}}
\title{\large \bf Nilpotent and polycyclic-by-finite maximal subgroups of skew linear groups}
\author{\small M. Ramezan-Nassab ${}^{a}$, D. Kiani  ${}^{a,}$\thanks{Corresponding author\newline
${}^{a}$ Department of Mathematics and Computer Science,
Amirkabir University of Technology (Tehran Polytechnic), P.O. Box:
15875-4413, Tehran, Iran\newline E-mail Addresses:
ramezann@aut.ac.ir, dkiani@aut.ac.ir}}
\date{}
\begin{document}
\maketitle
\begin{abstract}
Let $D$ be an infinite division ring, $n$ a natural number and $N$ a subnormal subgroup
of $\mathrm{GL}_n(D)$ such that  $n=1$ or the center of $D$ contains at
least five elements. This paper contains two main results. In the first one we prove that
each nilpotent maximal subgroup of $N$ is abelian; this generalizes the result in
[R. Ebrahimian, J. Algebra 280 (2004) 244--248] (which asserts that each maximal
subgroup of $\mathrm{GL}_n(D)$ is abelian) and a result
in [M. Ramezan-Nassab, D. Kiani, J. Algebra 376 (2013) 1--9].
In the second one we show that a maximal subgroup of $\mathrm{GL}_n(D)$ cannot be
polycyclic-by-finite.\\\\
{\bf Keywords}: Division ring, maximal subgroup, nilpotent group, polycyclic-by-finite group\\\\
{\bf Mathematics Subject Classification (2010):} 12E15, 16K40, 16R20, 20E28\\\\
\end{abstract}
\section{Introduction}
Throughout this paper $D$ denotes a division ring, $n$ is a natural
number, $M_n(D)$ is the full $n\times n$ matrix ring over $D$
and $\mathrm{GL}_n(D)$ is the group of units of $M_n(D)$.
The maximal soluble, maximal nilpotent, and maximal locally nilpotent subgroups
of general linear groups (over fields) were extensively
studied by Suprunenko; the main results are expounded in \cite{sup}.

Our object here is to discuss the general skew linear groups
whose maximal subgroups are of some special types.
Some properties of maximal subgroups of $\mathrm{GL}_n(D)$ have been
studied in a series of papers, see, e.g.,
\cite{akbari n, sol, nil, kiani, j.algebra, engel}.
In all of those papers, authors attempted to show that the structure of maximal subgroups of
$\mathrm{GL}_n(D)$ is similar, in some sense, to the structure of
$\mathrm{GL}_n(D)$. For instance, if $D$ is an infinite division
ring, in \cite{nil} it was shown that every nilpotent
maximal subgroup of $\mathrm{GL}_n(D)$ is abelian, and in \cite{engel}
the authors proved that for $n\geq 2$, every locally nilpotent
maximal subgroup of $\mathrm{GL}_n(D)$ is abelian. Also, if
$D$ is non-commutative and $n\geq 2$, in \cite{sol} it  was shown that every soluble
maximal subgroup of $\mathrm{GL}_n(D)$ is abelian, and in \cite{engel}
the authors proved that for $n\geq 3$, every locally soluble
maximal subgroup of $\mathrm{GL}_n(D)$ is abelian. For some recent results see~\cite{j.algebra}.

This paper contains two main results. In section~2, instead of maximal subgroups of
$\mathrm{GL}_n(D)$, we consider maximal subgroups of subnormal subgroups of $\mathrm{GL}_n(D)$.
The structure of such groups have been investigated in various papers, see, e.g.,
\cite{polynomial, fc subnormal,j.algebra, nilpotent}. As mentioned earlier, if $D$ is an infinite division
ring, in \cite{nil} it was shown that every nilpotent
maximal subgroup of $\mathrm{GL}_n(D)$ is abelian. In~\cite[Corollary~1]{fc subnormal}, the authors
proved that if $D$ is a finite-dimensional division algebra over its center, then
every nilpotent maximal subgroup of a subnormal subgroup of $\mathrm{GL}_n(D)$ is abelian.
In~\cite[Theorem~4]{j.algebra}, the author showed that (without any condition on dimension)
every nilpotent maximal subgroup of a subnormal subgroup of $\mathrm{GL}_n(D)$ is metabelian.
Here, we generalize those results and show that every nilpotent maximal subgroup of a
subnormal subgroup of $\mathrm{GL}_n(D)$ is abelian. More precisely, in section~2 we prove the
following theorem.

\begin{conj}
Let $D$ be an infinite division ring, $n$ a natural number, $N$ a subnormal subgroup
of $\mathrm{GL}_n(D)$ and $M$ a nilpotent maximal subgroup of $N$.
If $n=1$ or the center of $D$ contains at least five elements, then $M$ is abelian.
\end{conj}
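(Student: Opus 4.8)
The plan is to argue by contradiction: suppose $M$ is non-abelian. By \cite[Theorem~4]{j.algebra} we already know $M$ is metabelian, so $M'=[M,M]$ is a non-trivial abelian normal subgroup of $M$, and, $M$ being nilpotent and non-abelian, we can fix $x\in Z_2(M)\setminus Z(M)$ and $y\in M$ with $z:=[x,y]\ne 1$; then $z\in Z(M)$ and $x^{y^k}=z^kx$ for all $k$. I may assume $N$ is not central in $\mathrm{GL}_n(D)$, since otherwise $M\le N$ is already abelian. Under this assumption the structure theory of subnormal subgroups of skew linear groups gives: for $n=1$, $C_D(N)=Z(D)$ and the division subring of $D$ generated by $N$ is all of $D$; for $n\ge 2$ (this is where $|Z(D)|\ge 5$ enters, to remove the small-field exceptions in that structure theory, exactly as in the results being generalized) $N$ contains $\mathrm{SL}_n(D)$ and hence acts irreducibly on $D^n$. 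In either case a soluble subnormal subgroup of $\mathrm{GL}_n(D)$ is central, and in particular every abelian normal subgroup of $N$ is central in $N$.

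The engine of the proof is a normalizer dichotomy driven by maximality: if $B\le M$ is normalised by $M$ then $M\le N_N(B)\le N$, so $N_N(B)=M$ or $N_N(B)=N$; in the latter case $B$ is normal in $N$, hence, if $B$ is abelian, $B\subseteq Z(N)$, and then $B\subseteq Z(N)\cap M\subseteq Z(M)$. First I would dispose of two easy cases. Since $x\in Z_2(M)$, the subgroup $B=\langle x\rangle Z(M)$ is abelian and normal in $M$; the alternative $N_N(B)=N$ would force $x\in Z(M)$, so $N_N(\langle x\rangle Z(M))=M$. Next, if $M\trianglelefteq N$ then by maximality $N/M$ has prime order, so $N$ is (metabelian)-by-cyclic, hence soluble, hence central in $\mathrm{GL}_n(D)$, so $M$ is abelian — a contradiction. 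Thus $M$ is not normal in $N$ and $N_N(M)=M$.

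To finish I would bring in the enveloping algebra $R:=F[M]\subseteq M_n(D)$, $F=Z(D)$, and apply the dichotomy to $N_N(R)$. If $N$ normalises $R$, then since $N\supseteq\mathrm{SL}_n(D)$ (resp.\ $N$ generates $D$) the only $N$-invariant $F$-subalgebras are $F\cdot I$ and $M_n(D)$ (resp.\ $F$ and $D$); the first gives $M$ abelian, and the second is the absolutely irreducible case. If instead $R^*\cap N=M$ and $R$ is a division ring, then $M=R^*\cap N$ is a non-abelian nilpotent subnormal subgroup of $R^*$ generating $R$, which by arguments analogous to the above applied inside $R^*$, together with the classical fact that a division ring with nilpotent multiplicative group is a field, forces $R$ to be commutative, again making $M$ abelian; the case where $R$ is not a division ring reduces to this, via Wedderburn theory when $D$ is finite-dimensional over $F$ (so $R\cong M_m(\Delta)$, and $m\ge 2$ is incompatible with $R^*\cap N=M$ being nilpotent) and, when $D$ is infinite-dimensional over $F$, by first descending to a subfield $F_0\subseteq F$ over which $M$ becomes a finite-dimensional linear group. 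In the remaining, absolutely irreducible, case $C_{M_n(D)}(M)=F\cdot I$, so $Z(M)=M\cap F^{*}$ consists of scalars; then $z$ is a non-trivial scalar, and the relations $x^{y^k}=z^kx$ produce infinitely many scalar commutators in $M$ unless $z$ has finite order, whereupon a Dieudonné-determinant / reduced-norm computation — vacuous for $n=1$ and using $|Z(D)|\ge 5$ for $n\ge 2$ — forces $z=1$, the final contradiction. For $n\ge 2$ one must also rule out $M$ acting reducibly: if $W$ is a proper non-zero $M$-invariant subspace of $D^n$, then $M=\mathrm{Stab}_N(W)$ by maximality and irreducibility of $N$, but such a stabiliser contains a non-nilpotent section built from the unipotent radical and a Levi factor of the corresponding parabolic of $\mathrm{SL}_n(D)$, which is impossible.

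\textbf{Main obstacle.} The delicate step is the analysis of $R=F[M]$ when $D$ is infinite-dimensional over its centre: there $R$ need not be Artinian, Wedderburn structure theory is unavailable directly, and one must descend to a suitable subfield of $Z(D)$ (or pass to the division ring of fractions of a finitely generated domain inside $R$) to make $M$ a finite-dimensional linear group and then transfer the conclusion back. This is intertwined with the bookkeeping needed to convert the maximality of $M$ inside the merely \emph{subnormal} group $N$ into usable structural information, which is precisely the role of the repeated normalizer dichotomies above.
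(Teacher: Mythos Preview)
Your overall architecture---splitting on whether $N$ normalises $R=F[M]$ and then on whether $R$ (or its quotient ring) is a division ring---matches the paper's strategy, and several of your sub-arguments are correct (the reducible case via parabolics, the case where $R$ is a division ring and $M=R^*\cap N$ is a nilpotent subnormal subgroup of $R^*$). But two of the cases you flag as routine are exactly where the real work lies, and your proposed arguments there do not go through.

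\textbf{Absolutely irreducible case.} Your endgame is: pick $z=[x,y]\in Z(M)\subseteq F^*$ with $z\ne 1$, and force $z=1$ by a determinant/reduced-norm computation. This cannot work. The Dieudonn\'e determinant (or reduced norm) of a commutator is trivial, so for the scalar $z$ you only obtain $z^n\in[D^*,D^*]$ (or $z^d=1$ with $d$ the degree in the finite-dimensional case), not $z=1$; the quaternion computation $[i,j]=-1$ already shows a nontrivial central commutator surviving the norm. Likewise, ``infinitely many scalar commutators $z^k$'' is not a contradiction for a general nilpotent group---there is no finiteness hypothesis on $M$. The paper handles this case quite differently: from $F[M]=M_n(D)$ it invokes \cite[Theorem~5.7.11]{skew} to get $M/(M\cap F^*)$ locally finite, then uses a structural lemma (Lemma~\ref{main} in the paper, built on Clifford theory and Wedderburn--Artin) to analyse $F[H]$ for normal $H\le M$. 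This feeds into either a PI argument (via abelian-by-finite $\Rightarrow$ $FM$ PI $\Rightarrow$ $M_n(D)$ PI $\Rightarrow$ $[D:F]<\infty$) or a direct dimension count, both contradicting the infinite-dimensional reduction. No determinant enters.

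\textbf{The case $R^*\cap N=M$ with $R$ not a division ring.} Your proposed reduction ``descend to a subfield $F_0\subseteq F$ over which $M$ becomes a finite-dimensional linear group'' is not available: $M$ is merely nilpotent, not finitely generated, so there is no reason such an $F_0$ exists, and even for finitely generated $M$ you would need the entries of $M$ algebraic over the prime field to make $F_0[M]$ finite-dimensional. The paper's substitute here is substantive: it shows $R=F[M]$ is prime (irreducibility of $M$) and right Goldie (Wehrfritz, Lemma~\ref{gold1}), then applies Lanski's theorem (Lemma~\ref{lanski})---this is precisely where $|F|\ge 5$ is used---to force $R$ to be a domain, hence Ore; the classical quotient ring $\Delta\subseteq M_n(D)$ is then a genuine division ring and your own argument for that sub-case finishes. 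You correctly identified this as the ``main obstacle'', but the resolution is Goldie/Lanski machinery, not a descent to a smaller coefficient field.
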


In section~3, we consider polycyclic-by-finite skew linear groups.
For the reasons for doing this, see \cite[Chapter~4]{skew} and references there. We will see that $\mathrm{GL}_n(D)$
cannot be polycyclic-by-finite (Lemma~\ref{lem3}). In this direction, we show that maximal
subgroups of $\mathrm{GL}_n(D)$ have the same property, i.e., maximal subgroups of  $\mathrm{GL}_n(D)$
are not  polycyclic-by-finite. In fact we have:

\begin{conj}
Let $D$ be an infinite division ring, $n$ a natural number and $M$ a maximal subgroup of $\mathrm{GL}_n(D)$.
If $n=1$ or the center of $D$ contains at least five elements, then $M$ cannot be polycyclic-by-finite.
\end{conj}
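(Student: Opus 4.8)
I would argue by contradiction: suppose $M$ is polycyclic-by-finite. Then $M$ is solvable-by-finite, finitely generated, and — this is the property I intend to violate — all of its subgroups are finitely generated. Since by Lemma~\ref{lem3} the group $\mathrm{GL}_n(D)$ is not polycyclic-by-finite, $M$ is a \emph{proper} maximal subgroup; the plan is to locate inside $M$ a subgroup that is not finitely generated. Write $F=Z(D)$, so that $F^{\ast}$ is the centre of $\mathrm{GL}_n(D)$, and begin with the standard dichotomy: either $F^{\ast}\subseteq M$ or $F^{\ast}\not\subseteq M$.

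The case $F^{\ast}\not\subseteq M$ I expect to be routine. Here $MF^{\ast}$ is a subgroup strictly larger than $M$, so $MF^{\ast}=\mathrm{GL}_n(D)$; as $F^{\ast}$ is central this makes $M$ normal in $\mathrm{GL}_n(D)$ with abelian quotient, hence $[\mathrm{GL}_n(D),\mathrm{GL}_n(D)]\subseteq M$. For $n\ge 2$ this commutator subgroup is $E_n(D)=\mathrm{SL}_n(D)$ (this is essentially where $|Z(D)|\ge 5$ is needed), which contains root subgroups isomorphic to $(D,+)$, an infinite-dimensional $F$-space and so not finitely generated; for $n=1$ with $D$ noncommutative it is $[D^{\ast},D^{\ast}]$, which cannot be solvable-by-finite (otherwise, by the standard results on subnormal subgroups of division rings — Stuth's theorem together with Schur's theorem and the fact that a finite normal subgroup of $D^{\ast}$ is central — it would be central, forcing $D^{\ast}$ nilpotent and $D$ commutative); and for $n=1$ with $D$ commutative, $D^{\ast}=F^{\ast}$ is abelian and $M$ has prime index, hence is finitely generated, so $F^{\ast}$ would be finitely generated and $F$ finite, contradicting that $D$ is infinite. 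In each subcase $M$ contains a non-finitely-generated subgroup (or is forced to be contained in an infinite division ring with finitely generated multiplicative group), which is impossible.

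The case $F^{\ast}\subseteq M$ is the real work. Here $F^{\ast}$, as a subgroup of the polycyclic-by-finite group $M$, is finitely generated, so $F$ must be a \emph{finite} field (with $|F|\ge 5$ when $n\ge 2$); consequently $D$, being infinite with finite centre, is infinite-dimensional over $F$. Now I would run the structural analysis for solvable-by-finite maximal subgroups in the spirit of \cite{sol}, \cite{nil}, \cite{engel}: pass to a finite-index solvable normal subgroup of $M$, take a minimal nonzero term $A\ne 1$ of its derived series (an abelian subgroup normal in $M$), and use maximality of $M$ to identify $M$ with the normalizer of $A$, respectively of the subfield or subalgebra it generates, respectively of its unit group. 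This normalizer is either improper — excluded via the Cartan--Brauer--Hua theorem — or places inside $M$ the unit group $K^{\ast}$ of a subfield $K$ (or the unit group $D_0^{\ast}$ of the division subring $D_0=C_D(M)$); since $M$ is polycyclic-by-finite such a $K^{\ast}$ is finitely generated, whence $K$, and then $A$, is finite, while $D_0^{\ast}$ finitely generated forces $D_0$ finite by Lemma~\ref{lem3} applied to $D_0$. Either way $M$ is finite up to finite index, hence finite, and one must then rule out finite maximal subgroups of $\mathrm{GL}_n(D)$ for infinite $D$: if $M$ is finite and normal in $\mathrm{GL}_n(D)$, then $C_{\mathrm{GL}_n(D)}(M)$ has finite index, hence contains $\mathrm{SL}_n(D)$ (which has no proper finite-index subgroup since $\mathrm{PSL}_n(D)$ is simple — again using $|Z(D)|\ge 5$), forcing $M\subseteq F^{\ast}$ and then $M=F^{\ast}$, impossible because $\mathrm{GL}_n(D)/F^{\ast}$ is infinite; the self-normalizing case is handled by a parallel centralizer argument together with the $n=1$ case of Theorem~A. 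A reducible $M$ is disposed of along the way: the stabilizer of a proper nonzero subspace contains a ``unipotent'' additive group $M_{a\times b}(D)$, again not finitely generated. This exhausts the possibilities.

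The main obstacle, then, is the case $F^{\ast}\subseteq M$, and specifically the fact that $F$ may genuinely be finite (infinite division rings with finite centre do exist), so that one cannot just say ``a subfield containing the centre is infinite''; the burden becomes showing that every field, division subring, and normal subgroup thrown up by the structural analysis is finite, hence that $M$ itself is finite, and then excluding finite maximal subgroups of $\mathrm{GL}_n(D)$ for infinite $D$. By contrast, once a concrete configuration is reached — the unit group of an infinite field, a root subgroup $(D,+)$, or an infinite-rank vector group — exhibiting the non-finitely-generated subgroup that contradicts polycyclic-by-finiteness is immediate.
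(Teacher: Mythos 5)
Your first branch ($F^*\not\subseteq M$, and likewise the reducible case) is fine, and the reduction in the branch $F^*\subseteq M$ to $F$ finite and $D$ infinite-dimensional over $F$ is correct. But that second branch is where the entire difficulty of the theorem lives, and your sketch of it contains two genuine gaps rather than omitted routine detail. First, you claim that the ``improper normalizer'' alternative is ``excluded via the Cartan--Brauer--Hua theorem.'' It is not. If $B$ is the relevant normal subgroup of $M$ (say the last term of the derived series of the finite-index polycyclic subgroup that is not contained in $F$), then $B$ is a priori only nilpotent of class~$2$, not abelian, and Cartan--Brauer--Hua applied to $N(F(B)^*)=D^*$ (resp.\ $N(F[B]^*)=\mathrm{GL}_n(D)$) leaves open the alternative $F(B)=D$ (resp.\ $F[B]=M_n(D)$). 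This ``the normal subgroup generates everything'' configuration is exactly what consumes most of Section~3 of the paper: for $n=1$ it is handled by Lemma~\ref{p.f}, whose proof rests on Wehrfritz's normalizer theorem for locally nilpotent subgroups (Lemma~\ref{wehr08}), then by Wehrfritz's theorem for metabelian subgroups (Lemma~\ref{wehr07}) together with the soluble-maximal theorem of \cite{sol} and Lemma~\ref{metabelian} in Theorem~\ref{thm3}; for $n\geq 2$ the absolutely irreducible case needs \cite[Theorem~1(i)]{j.algebra} plus Lemma~\ref{a-by-f}, and the non-absolutely-irreducible case needs Hall's Noetherian theorem, Lanski's theorem (Lemma~\ref{lanski}) and an Ore-quotient argument (Theorem~\ref{poly2}). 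None of this is replaced by anything in your outline; you cannot even assert your $A$ is simultaneously abelian and non-central, since the last nontrivial derived term may be central and the last non-central term may be non-abelian (making $B$ abelian is itself part of what Lemma~\ref{p.f} does).

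Second, even granting that the subfield $K=F(A)$ generated by your abelian term satisfies $K^*\subseteq M$ and is therefore finite, the step ``either way $M$ is finite up to finite index, hence finite'' is a non sequitur. Finiteness of the bottom term of the derived series gives at most $[M:C_M(A)]<\infty$; it provides no mechanism for propagating finiteness up the series to $M$ itself. The paper climbs exactly one step (from the abelian term $H^{(r)}$ to the metabelian term $H^{(r-1)}$) and at that point must invoke the Wehrfritz machinery just described to analyze $N_{D^*}(H^{(r-1)})\supseteq M$; your sketch has no substitute for this step. So while the easy branch of your dichotomy is a legitimate (and mildly different) preliminary reduction, the core of the theorem remains unproved in your proposal.
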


Note that all division rings which are of characteristic zero, or algebraic over their centers
have at least five elements in their centers. However, it seems that in Theorems~A and B (for $n\geq 2$)
the condition that the center of $D$ contains at least five elements is not necessary (see also
the remarks after Theorem~\ref{poly2}).

Our notation is standard. To be more precise, $F$ always denotes the center of the
division ring $D$ unless stated otherwise.
We shall identify the center $FI_n$ of $M_n(D)$ with
$F$. If $D$ has at least four elements, for $n\geq 2$
we denote by $\mathrm{SL}_n(D)$ the derived subgroup of $\mathrm{GL}_n(D)$.
Let $G$ be a subgroup of $\mathrm{GL}_n(D)$. We denote by
$F[G]$ the $F$-linear hull of $G$, i.e., the $F$-algebra
generated in $M_n(D)$ by elements of $G$ over $F$. If $n=1$, then
$F(G)$ is the division ring generated in $D$ by $F$ and $G$; note
that if each element of $G$ is algebraic over $F$, then
$F(G)=F[G]$. If $D^n$ is the space of row $n$-vectors over $D$,
then $D^n$ is a $D$-$G$ bimodule in the obvious manner. We say
that $G$ is irreducible, reducible, or completely reducible,
whenever $D^n$ has the corresponding property as $D$-$G$
bimodule. Also, $G$ is called absolutely irreducible if $F[G]
=M_n(D)$. The derived subgroup of $G$ denotes by $G'$.
For a given ring $R$, the group of units of $R$ is denoted
by $R^*$. Let $S$ be a subset of $R$, then the centralizer of $S$ in $R$ is
denoted by $C_R(S)$.
\section{Nilpotent maximal subgroups}
In this section we prove Theorem~A. First we assert some useful lemmas which are also
used in the next section.

\begin{lem}\label{gold1} {\rm(\cite[Corollary~24]{wehr89})}
Let $A$ be a one-sided Artinian ring. Suppose $S$ is a
right Goldie subring of $A$ and $G$ is a locally soluble subgroup of the group
of units of $A$ normalizing $S$. Set $R = S[G]\leq A$ and assume $R$ is prime.
Then $R$ too is right Goldie.
\end{lem}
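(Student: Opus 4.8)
The plan is to establish, for $R$, the two conditions defining a right Goldie ring: the ascending chain condition on right annihilators, and finiteness of the right uniform (Goldie) dimension.

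First I would dispose of the chain condition, which follows from the ambient ring alone: for any $X\subseteq R$ one has $\mathrm{r.ann}_R(X)=\mathrm{r.ann}_A(X)\cap R$, so an ascending chain $\rho_1\subseteq\rho_2\subseteq\cdots$ of right annihilators of $R$, written as $\rho_i=\mathrm{r.ann}_R(L_i)$ with $L_i=\mathrm{l.ann}_R(\rho_i)$, yields a descending chain $L_1\supseteq L_2\supseteq\cdots$ of left ideals of $R$ and hence an ascending chain $\mathrm{r.ann}_A(L_1)\subseteq\mathrm{r.ann}_A(L_2)\subseteq\cdots$ in $A$. As $A$ is one-sided Artinian it is Noetherian on one side (Hopkins--Levitzki) and so satisfies the ascending chain condition on right annihilators; the chain in $A$ stabilises, and intersecting back with $R$ shows that $\rho_1\subseteq\rho_2\subseteq\cdots$ does too. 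This step uses nothing about $G$ beyond $G\subseteq A^{*}$.

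The real work is to show that $R$ has finite right uniform dimension. Since $G$ normalises $S$ and $1\in S$, using $gs=(gsg^{-1})g$ to move the $S$-entries to the left in each product gives $R=\sum_{g\in G}Sg$ as a right $S$-module; and if $H=G\cap S^{*}$, a normal subgroup of $G$ contained in $S$, then picking a transversal for $H$ in $G$ gives $R=\sum_iSg_i$, so
\[
 \mathrm{u.dim}(R_R)\ \le\ \mathrm{u.dim}(R_S)\ \le\ [G:H]\cdot\mathrm{u.dim}(S_S).
\]
Hence the claim is immediate whenever $[G:H]$ --- in particular $G$ --- is finite. For the general case I would use the structure of $G$: as a locally soluble subgroup of the unit group of the one-sided Artinian ring $A$, after passing to $A/J(A)$ (a finite product of matrix rings over division rings, with $1+J(A)$ nilpotent) and applying Mal'cev-type results, $G$ acquires a subnormal series whose factors are finite or infinite abelian of finite rank. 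Then I would induct along such a series, at each step presenting the ring built so far as a crossed-product-type extension $T*C$ of an already-known right Goldie ring $T$ by an abelian group $C$, and invoking the standard fact that crossed products of a right Goldie ring by a polycyclic-by-finite --- more generally, a finite-rank soluble --- group are again right Goldie. Primeness of $R$ enters decisively here: it keeps the uniform dimensions of the approximating subrings $S[G_0]$, with $G_0$ ranging over the finitely generated subgroups of $G$, bounded, so that their directed union does not acquire infinite uniform dimension, and through Goldie's theorem it lets one detect the Goldie property via the existence of a simple Artinian classical right quotient ring at every stage.

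The step I expect to be the main obstacle is precisely this uniform-dimension bound when $G/(G\cap S^{*})$ is infinite. When $G$ has badly infinite abelian sections --- which do occur for locally soluble subgroups of unit groups of Artinian rings, e.g.\ sections of $\mathbb{Z}[1/p]$-type --- the finite-case estimate is useless, and one must combine the finite-rank (minimax) control of such sections with the Artinian property of $A$: fix a hypothetical infinite independent family of nonzero right ideals of $R$, observe that it, together with the finitely many elements needed to realise the relevant crossed-product relations, already lies in some $S[G_0]$, and extract a contradiction from an essential-ideal argument carried out inside $A$. The crossed-product bookkeeping for abelian $G$ (skew-Laurent extensions, localisation at central regular elements) and the absorption of $G\cap S^{*}$ into $S$ are routine once set up; the argument ultimately rests on the reduction theory for locally soluble linear groups together with the joint use of primeness of $R$ and of the Artinianness of $A$.
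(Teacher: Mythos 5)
The paper itself offers no proof of this lemma: it is quoted verbatim as \cite[Corollary~24]{wehr89}, so the only available comparison is with Wehrfritz's paper, where the statement is the culmination of a substantial analysis of locally soluble groups of units of Artinian rings. Your first step, the ascending chain condition on right annihilators, is correct and standard: a one-sided Artinian ring satisfies the ACC on right annihilators (right Artinian gives right Noetherian by Hopkins--Levitzki; left Artinian gives DCC on left annihilators, which is equivalent), and this condition passes to arbitrary subrings exactly by the double-annihilator argument you give. Note, though, that this part uses none of the hypotheses on $G$, on $S$ being Goldie, or on primeness, so it is not evidence that the plan engages with the theorem.

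The genuine gap is the entire second half, finiteness of the right uniform dimension, which is the actual content of the lemma; your proposal replaces it by two unsubstantiated claims and one deferred step. First, the structural claim that a locally soluble subgroup of $A^*$ admits (after passing to $A/J(A)$ and ``Mal'cev-type results'') a subnormal series with factors finite or abelian of finite rank is not justified and is false in this generality: the multiplicative group of a locally finite field is already an abelian torsion group of units of infinite rank, locally soluble skew linear groups need not be soluble, and nothing in the hypotheses bounds the rank of abelian sections of $G$. Second, the ``standard fact'' that a crossed product of a right Goldie ring by a finite-rank soluble group is again right Goldie is not standard; even group algebras of torsion abelian groups fail to be Goldie without primeness, and once primeness is built in the assertion is essentially of the same depth as the lemma being proved. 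Third, the step you yourself identify as the main obstacle --- bounding the uniform dimension when $G/(G\cap S^*)$ is infinite by ``an essential-ideal argument carried out inside $A$'' combining primeness of $R$ with Artinianness of $A$ --- is precisely where Wehrfritz's argument does its real work, and in your write-up it is a stated hope rather than an argument. As it stands you have proved only the easy half of the Goldie condition; the finite-dimension half rests on claims that are either false as stated or at least as hard as the result itself, so the proposal does not constitute a proof.
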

\begin{lem}\label{metabelian}
Let $D$ be an infinite-dimensional division algebra over its center, $N$ a subnormal subgroup
of $D^*$, and $M$ a maximal subgroup of $N$. If $M$ is metabelian, then it is abelian.
\end{lem}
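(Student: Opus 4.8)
The plan is to argue by contradiction: assume that $M$ is metabelian but not abelian, and put $A=M'$, a nontrivial abelian normal subgroup of $M$. Being metabelian, $M$ is solvable, hence locally solvable, which is exactly what is needed to feed it into Lemma~\ref{gold1}. Throughout, the leverage comes from two sources: the maximality of $M$ in $N$, and the classical facts that a solvable (indeed locally solvable) subnormal subgroup of the multiplicative group of a division ring lies in the center, and that the centralizer in $D$ of a non-central subnormal subgroup of $D^{*}$ equals $F$.

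The first goal is to show $A\subseteq F$. Since $A\trianglelefteq M$, the group $M$ normalizes $C_N(A)=C_{D^{*}}(A)\cap N$, so $M\,C_N(A)$ is a subgroup of $N$ containing $M$ and hence, by maximality, equals $M$ or $N$. If $M\,C_N(A)=N$, then writing an arbitrary element of $N$ as (an element of $M$)(an element centralizing $A$) shows that $A$ is normal in $N$; thus $A$ is an abelian subnormal subgroup of $D^{*}$ and so $A\subseteq F$. The substantial case is $C_N(A)=C_M(A)$. Here I would apply Lemma~\ref{gold1} with the (one-sided) Artinian ring $D$, the commutative — hence Goldie — domain $S=F[A]$, and the locally solvable group $M$ normalizing $S$: the ring $R=S[M]=F[M]$ is a subring of $D$, hence a prime (in fact domain) ring, so $F[M]$ is right Goldie and is therefore a right order in the division subring $F(M)\subseteq D$. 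With this Goldie structure in hand I would again exploit maximality — comparing $M$ with $M\,C_N(C_M(A))$ (if this equals $N$, then $C_M(A)$ is a solvable subnormal subgroup of $D^{*}$, hence central, and $A\subseteq C_M(A)\subseteq F$) and, in the remaining case, with $F(M)^{*}\cap N$, using $C_D(N)=F$ for non-central $N$ — to conclude that in this case too $A=M'\subseteq F$.

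It then remains to derive a contradiction from $M'\subseteq F^{*}$ with $M$ non-abelian. Now $M$ is nilpotent of class $2$, and $(x,y)\mapsto[x,y]$ is an alternating biadditive pairing $M/Z(M)\times M/Z(M)\to F^{*}$ with trivial radical. I would first observe that $M\,C_N(M)$ cannot be all of $N$: otherwise $N'=M'\subseteq F$, so $N$ would be a solvable subnormal subgroup of $D^{*}$, hence central, forcing $M$ abelian. Thus $C_N(M)=Z(M)$. Running the maximality argument once more on the normalizer of $Z(M)$ in $N$ and on $C_N(z_0)$ for a suitable $z_0\in Z(M)$ (using $C_D(N)=F$ to dispose of the degenerate sub-cases), I would pin down $M=C_N(z_0)$ and then force $M$ to be abelian, the final contradiction.

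The crux — and the step I expect to be the main obstacle — is the case $C_N(A)=C_M(A)$ in the first goal. Because $D$ is infinite-dimensional over $F$, the division ring $F(M)$ may itself be infinite-dimensional over its center, so none of the finite-dimensional structure theory available in the finite-dimensional case applies; one must instead extract commutativity of $A$ (and ultimately of $M$) from the mere Goldie property handed over by Lemma~\ref{gold1}, combined with carefully orchestrated, repeated applications of the maximality of $M$ in $N$. The concluding step, once $M'\subseteq F$, is comparatively routine but still demands careful bookkeeping of which overgroup of $M$ inside $N$ is being used at each stage.
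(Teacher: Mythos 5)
Your argument has a genuine gap, and it sits exactly where you yourself locate ``the main obstacle''. In the case $C_N(A)=C_M(A)$ you never actually argue anything: ``I would again exploit maximality \dots to conclude that in this case too $A=M'\subseteq F$'' and, later, ``I would pin down $M=C_N(z_0)$ and then force $M$ to be abelian'' are statements of intent, not proofs. Moreover, the intermediate claim cannot follow from maximality plus the Goldie property alone, because nowhere in your first goal does infinite-dimensionality enter, while the lemma is false without it: take $D=\mathbb{H}$, $N=D^*$, $M=\mathbb{C}^*\cup\mathbb{C}^*j$, a metabelian non-abelian maximal subgroup. There $A=M'$ is the circle group, $C_N(A)=C_M(A)=\mathbb{C}^*$ and $C_N(C_M(A))=\mathbb{C}^*\subseteq M$, so this example lands precisely in your unresolved ``remaining case'' and has $M'\not\subseteq F$. (Your comparison with $F(M)^*\cap N$ only disposes of the subcase $N_N(F(M)^*)=M$, where $M$ becomes a soluble subnormal subgroup of $F(M)^*$ and hence abelian; in the other subcase one merely gets $F(M)=D$, which is exactly what happens in the quaternion example, and no contradiction is available without using $[D:F]=\infty$.) A smaller slip: from $M\,C_N(M)=N$ you infer $N'=M'$, but $C_N(M)$ need not be abelian, so one only gets $N'=M'\,C_N(M)'$; the intended contradiction is salvageable by noting instead that $M$ would then be normal in $N$, hence a soluble subnormal subgroup of $D^*$, hence central.

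The paper closes the hard case by a mechanism your plan has no analogue of. It takes $A$ to be a \emph{maximal} abelian normal subgroup of $M$ containing $M'$ (not $A=M'$), shows via maximality of $M$ in $N$ and Stuth's theorem that $F(T)=D$ for every subgroup $T$ with $A\lneqq T\leq M$, and then, with $K=F(A)$, splits on transcendence: if some $a\in M\setminus K$ is transcendental over $K$, then Lemma~\ref{gold1} makes $F[A\langle a^2\rangle]$ an Ore domain whose quotient division ring is $D$, and writing $a=s_1s_2^{-1}$ with $s_1,s_2\in F[A\langle a^2\rangle]$ (sums $\sum_i k_i a^{2i}$, $k_i\in K$) forces $a$ to be algebraic over $K$, a contradiction; if instead every element of $M\setminus K$ is algebraic over $K$, then $F(A\langle x\rangle)=\sum_{i}Kx^i$ yields $[D:K]_l<\infty$, hence $D$ finite-dimensional over its center, contradicting the hypothesis. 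That transcendental/algebraic dichotomy over $F(A)$, exploiting the Ore quotient description of $D$, is the missing ingredient; without it (or something equally reliant on infinite-dimensionality) your outline cannot be completed.
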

\begin{proof}
Since $M'$ is abelian, we can find a maximal normal abelian subgroup $A$ of $M$ containing $M'$.
Suppose on the contrary that $A\neq M$. If $T$ is a subgroup of $M$ such that $A\lneqq T$, we claim that $F(T)=D$. In fact,
we have $M\subseteq  N_N(F(T)^*)\subseteq N$. If $M= N_N(F(T)^*)$, then $F(T)^*\cap N\leq M$,
so $F(T)^*\cap N$ as a metabelian subnormal subgroup of $F(T)^*$ is abelian, so $T$ is also abelian,
this contradicts the choose of $T$. Therefore, by the maximality of $M$ in $N$ we may assume
$N_N(F(T)^*)=N$. Then $N\subseteq N_{D^*}(F(T)^*)$, which
by~\cite[13.3.8]{scott} we have $F(T)=D$, as claimed.

Setting $K=F(A)$, clearly $M\nsubseteq K$.  Suppose there is some $a\in M\setminus K$ which is transcendental
over $K$ and set $T=A\langle a^2\rangle$.
By~Lemma~\ref{gold1}, $F[T]$ is a Goldie ring; since it is also a domain,
it is an Ore domain. On the other hand,
by the fact that $a^2\notin A$ we conclude that $T$ is a subgroup of $M$
properly containing $A$; hence by what we proved before we conclude
that $F(T)=D$. Hence the division ring generated by
$F[T]$, which is exactly its classical ring of quotients, coincides with $D$.
Thus there exist two elements $s_1,s_2\in F[T]$ such that $a=s_1s_2^{-1}$.
Write $s_1=\sum_{i=l}^m k_ia^{2i}$ and $s_2=\sum_{i=l}^m k_i'a^{2i}$,
where $k_i, k_i'\in K$, for any $l\leq i\leq m$. Hence
$$\sum_{i=l}^m ak_i'a^{2i}=\sum_{i=l}^m k_ia^{2i}.$$
If we set $l_i=ak_i'a^{-1}$, for any $1\leq i\leq m$, then $l_i$'s are elements of $K$
and we have
$$\sum_{i=l}^m l_ia^{2i+1}=\sum_{i=l}^m k_ia^{2i},$$
which shows that $a$ is algebraic over $K$, a contradiction.

Now let $x\in M\setminus K$ be algebraic
over $K$. Assume that $x$ satisfies an equation of the form
$\sum_{i=0}^n k_i x^i=0$, where $k_i\in K$ for any $0\leq i\leq n$ and $k_n=1$. Using the fact that $x$
normalizes $K$ and the above equality one can easily show that $R=\sum_{i=0}^n Kx^i$ is a ring
that is of finite-dimension as a left vector space over $K$. Therefore it is a division ring. If
we set $T=A\langle x\rangle$, by what we proved before $F(T)=D$. On the other
hand obviously we have $F(T)=R$. Therefore $[D:K]_l<\infty$. Thus
$D$ is a finite-dimensional division algebra over its center. This contradiction shows that
$M$ is abelian.
\end{proof}

In \cite{akbari n} it was proved that $\mathbb{C^*\cup C^*}j$
is a maximal subgroup of the real quaternion division algebra. Clearly
$(\mathbb{C^*\cup C^*}j)'\subseteq \mathbb{C^*}$ and so $\mathbb{C^*\cup C^*}j$
is metabelian but not abelian. Thus in~Lemma~\ref{metabelian}, the condition that $D$ is
of infinite-dimensional cannot be removed.

Now, using Lemma~\ref{metabelian}, we can prove Theorem~A for $n=1$.

\begin{thm}\label{thm1}
Let $D$ be a division ring and $N$ a subnormal subgroup
of $D^*$. Then every nilpotent maximal subgroup of $N$ is abelian.
\end{thm}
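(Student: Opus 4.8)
The theorem asserts that every nilpotent maximal subgroup $M$ of a subnormal subgroup $N$ of $D^*$ is abelian. We already have Lemma~\ref{metabelian} at our disposal, so the strategy is to reduce to the case of an infinite-dimensional division algebra and show that in all other cases the nilpotent maximal subgroup is outright abelian. First I would dispose of the two easy regimes. If $D$ is a locally finite division ring (e.g.\ $D$ is commutative and algebraic over a finite field, or more generally $D$ is algebraic over a finite field), then $D$ is a field by Jacobson's theorem, hence $D^*$ and all its subgroups are abelian and there is nothing to prove. Otherwise $D$ is noncommutative or at least $D^*$ contains an element of infinite order; in any case the nontrivial content is when $D$ is noncommutative.

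\medskip

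\noindent\textbf{Reduction via $F[M]$.** The main step is to analyze the $F$-algebra (in fact division ring, if $M$ consists of algebraic elements) $F(M)$ generated by $M$. Since $M$ is nilpotent, it is in particular locally soluble; I want to argue that $M$ is abelian by playing off two cases according to whether $F(M)=D$ or not. If $F(M)\subsetneq D$, then $M$ normalizes the proper division subring $F(M)$, so $M\le N_N(F(M)^*)\le N$; by maximality of $M$ in $N$ either $M=N_N(F(M)^*)$ — in which case $F(M)^*\cap N$ is a subnormal subgroup of $F(M)^*$ contained in $M$, hence $M=F(M)^*\cap N$ is itself subnormal in $F(M)^*$, and a standard result (the nilpotent-subgroups-of-skew-linear-groups machinery, or directly Stuth/Herstein-type theorems on nilpotent subnormal subgroups of division rings) forces $M$ abelian — or $N_N(F(M)^*)=N$, whence $N$ normalizes $F(M)^*$ and by \cite[13.3.8]{scott} we get $F(M)=D$, a contradiction. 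So we may assume $F(M)=D$, i.e.\ $M$ generates $D$ as a division ring.

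\medskip

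\noindent\textbf{The crux.} With $F(M)=D$ and $M$ nilpotent, I would show $D$ must be a field, contradicting noncommutativity. The key is that a nilpotent group generating a division ring is "almost central": using that $M$ is nilpotent, an element $x$ of $M$ and a central-series argument show that commutators $[\,\cdot\,,x]$ eventually vanish, so the subring generated by $M$ behaves like a nilpotent-by-abelian object; combined with $F(M)=D$, one concludes $D$ is generated by a nilpotent multiplicative group, and by the classical structure theory (a division ring which is radical over a subfield, or generated by a nilpotent subgroup of its multiplicative group, is commutative — this is in the spirit of results of Herstein and Hua) we force $D$ commutative. Alternatively, and more in the style of this paper, I would split on dimension: if $[D:F]<\infty$, then $N$ is a subnormal subgroup of $D^*$ with $D$ finite-dimensional, and one invokes \cite[Corollary~1]{fc subnormal} directly to get $M$ abelian; if $[D:F]=\infty$, one first shows via \cite[Theorem~4]{j.algebra} that $M$ is metabelian, and then Lemma~\ref{metabelian} upgrades metabelian to abelian. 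This last route is the cleanest: \emph{$M$ nilpotent $\Rightarrow$ $M$ metabelian (by \cite[Theorem~4]{j.algebra}) $\Rightarrow$ $M$ abelian (by Lemma~\ref{metabelian} when $[D:F]=\infty$, by \cite[Corollary~1]{fc subnormal} when $[D:F]<\infty$).}

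\medskip

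\noindent\textbf{Expected obstacle.} The subtle point is the borderline case inside the $F(M)=D$ analysis where $M$ contains elements transcendental over $F(A)$ (with $A$ a maximal normal abelian subgroup of $M$ containing $M'$): one has to rule out that such transcendental elements can coexist with $M$ generating all of $D$. This is exactly the Goldie/Ore-domain argument appearing in the proof of Lemma~\ref{metabelian} — one shows $F[A\langle a^2\rangle]$ is an Ore domain whose division ring of quotients is $D$, then writes $a$ as a quotient of two elements of that ring and derives that $a$ is algebraic over $F(A)$, a contradiction. Handling the purely algebraic case afterwards reduces to a finite-dimensionality computation forcing $[D:F(A)]<\infty$, hence $[D:F]<\infty$, returning us to the finite-dimensional case already covered. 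So the whole argument is a careful bookkeeping exercise layered on top of Lemma~\ref{metabelian} and the cited metabelian result; no genuinely new idea beyond those is needed, but the case analysis must be watertight.
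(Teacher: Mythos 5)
Your final route -- $M$ nilpotent $\Rightarrow$ $M$ metabelian (via the cited result that nilpotent maximal subgroups of subnormal subgroups are metabelian) $\Rightarrow$ $M$ abelian by Lemma~\ref{metabelian} when $[D:F]=\infty$ and by \cite[Corollary~1]{fc subnormal} when $[D:F]<\infty$ -- is exactly the paper's proof, which cites \cite[Proposition~1.1]{nilpotent} for the metabelian step and then applies the same two results. The preceding case analysis ($F(M)=D$ versus $F(M)\subsetneq D$, the Herstein/Hua-style discussion) is unnecessary scaffolding, but the argument you single out as the cleanest is correct and is the one the paper uses.
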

\begin{proof}
Let $M$ be a nilpotent maximal subgroup of $N$. By~\cite[Proposition~1.1]{nilpotent}
$M'$ is an abelian group. Thus by~Lemma~\ref{metabelian}, we may assume that
$D$ is a finite-dimensional division ring, which by~\cite[Corollary~1]{fc subnormal}
we conclude the result.
\end{proof}

The proof of Theorem~A, for $n\geq 2$, needs some different approaches. To this end,
we have following two lemmas.
\begin{lem}\label{main}
Let $D$ be a non-commutative division ring with center $F$, $N$ a subnormal subgroup
of $\mathrm{GL}_n(D)$, and $M$ an absolutely irreducible and maximal subgroup of $N$.
If $M/(M\cap F^*)$ is locally finite, then for any normal subgroup $H$ of $M$ we
either have $H\subseteq F^*$, $F[H]=M_n(D)$, or $F[H]\simeq F_1\times\cdots\times F_s$
for some natural number $s$ and fields $F_i\supseteq F$.
\end{lem}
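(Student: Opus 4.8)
The plan is to analyse the $F$-algebra $A:=F[H]\subseteq M_n(D)$, which is stable under conjugation by $M$ because $H\trianglelefteq M$. The argument combines three inputs: the hypothesis on $M/(M\cap F^*)$ makes $A$ locally finite-dimensional over $F$; Clifford's theorem makes $A$ semisimple; and the maximality of $M$ in $N$ forces $A$ to be $F$ or $M_n(D)$ as soon as it is noncommutative. First, since $H/(H\cap F^*)\hookrightarrow M/(M\cap F^*)$ is locally finite, every $h\in H$ satisfies $h^m\in F^*$ for some $m$, hence is algebraic over $F$; moreover any finitely many elements of $H$ generate a subgroup $H_0$ which is finite modulo its central subgroup $H_0\cap F^*$, and writing $H_0$ as a finite union of cosets of $H_0\cap F^*$ and using $F[H_0\cap F^*]=F$ shows that $F[H_0]$ is finite-dimensional over $F$. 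Thus every finitely generated $F$-subalgebra of $A$ is finite-dimensional.

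Because $M$ is absolutely irreducible, $D^n$ is an irreducible $D$-$M$-bimodule, so by Clifford's theorem it is a completely reducible $D$-$H$-bimodule; since a nilpotent ideal of $A$ would annihilate this semisimple module and hence vanish ($A$ acting faithfully on $D^n$), $A$ is semiprime. A semiprime locally finite-dimensional $F$-algebra has zero Jacobson radical, and as a subalgebra of $M_n(D)$ it contains at most $n$ orthogonal idempotents; hence $A$ is semisimple Artinian, say $A=B_1\times\cdots\times B_s$ with $B_i=M_{k_i}(\Delta_i)$ simple Artinian. Conjugation by $M$ permutes the primitive central idempotents $e_1,\dots,e_s$, and the sum of those in any $M$-orbit is a central idempotent of $A$ centralized by $M$, hence lying in $C_{M_n(D)}(F[M])=C_{M_n(D)}(M_n(D))=F$ and therefore equal to $0$ or $1$; so $M$ acts transitively on the blocks, and in particular they are mutually isomorphic. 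If $A\subseteq F$ then $H\subseteq A^*\subseteq F^*$; if $A$ is commutative it is a finite direct product of fields $F_i\supseteq F$ (again by the idempotent bound). So it remains to prove that \emph{if $A$ is noncommutative, then $A=M_n(D)$.}

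Here one uses maximality. Note that a noncommutative $D$ forces $F=Z(D)$ to be infinite, so in particular $|F|\ge 5$ and $\mathrm{SL}_n(D)=\mathrm{GL}_n(D)'$ for $n\ge 2$. Let $G_0:=\{g\in N:\ gAg^{-1}=A\}$, a subgroup of $N$ containing $M$; by maximality $G_0=N$ or $G_0=M$. If $G_0=N$, then the noncentral subnormal subgroup $N$ of $\mathrm{GL}_n(D)$ (it is noncentral since $F[M]=M_n(D)\neq F$) normalizes $A$, and a Cartan–Brauer–Hua type argument applies: for $n=1$, $A$ is a semisimple Artinian subring of the division ring $D$, hence a division subring, and \cite[13.3.8]{scott} gives $A=F$ or $A=D$; for $n\ge 2$, $N$ contains $\mathrm{SL}_n(D)$, and a subalgebra of $M_n(D)$ invariant under $\mathrm{SL}_n(D)$-conjugation must be $F$ or $M_n(D)$. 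Since $A$ is noncommutative this yields $A=M_n(D)$. Suppose instead $G_0=M$. Passing to $C:=C_{M_n(D)}(A)$, which is again $M$-invariant, and applying the same dichotomy to $\{g\in N:\ gCg^{-1}=C\}\supseteq M$: if it equals $N$ we get $C=M_n(D)$ (forcing $A\subseteq F$, impossible) or $C=F$, and in the latter case $Z(A)\subseteq C=F$ forces $s=1$ and $Z(\Delta_1)=F$, so $A=M_k(\Delta)$ is a proper central simple $F$-subalgebra of $M_n(D)$ with $C_{M_n(D)}(A)=F$. One then aims for a contradiction: exploiting the faithful $(D,A)$-bimodule $D^n$, the normality $H\trianglelefteq M$, and the absolute irreducibility $F[M]=M_n(D)$, either produce a unit of $N\setminus M$ normalizing $A$ (against $G_0=M$), or show, using Skolem–Noether on finite-dimensional subalgebras of $A$, that the entries of every element of $M$ centralize $A$, which puts $M$ inside $\mathrm{GL}_n$ of a proper division subring and contradicts absolute irreducibility.

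I expect this last step — excluding a proper noncommutative $A$ in the case $G_0=M$ — to be the main obstacle, together with the subsidiary point that $\{g\in N:gCg^{-1}=C\}=M$ does not by itself yield a contradiction and needs a further reduction (e.g.\ via $C_{M_n(D)}(C)$). The source of the difficulty is that the hypotheses only force $D$ to be algebraic over $F$, not finite-dimensional, so the double-centralizer theorem is not available wholesale; the way around should be to use that $A$ is locally finite-dimensional and, once $Z(A)=F$ is known, to apply the double-centralizer theorem for central simple algebras to finite-dimensional subalgebras of $A$ and pass to the limit.
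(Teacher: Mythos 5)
Your reduction of the lemma to the claim ``if $A=F[H]$ is noncommutative then $A=M_n(D)$,'' and your treatment of the branch $N_N(A)=N$ via a Cartan--Brauer--Hua argument, agree with the paper's first case. The genuine gap is the other branch, $G_0=N_N(A)=M$ with $A$ noncommutative: this is where the whole content of the lemma sits, and your proposal leaves it unresolved (you flag it yourself as ``the main obstacle''). The centralizer/Skolem--Noether route you sketch does not close, and, as you observe, the double centralizer theorem is unavailable because $D$ is only locally finite-dimensional over $F$. The missing idea is group-theoretic, not ring-theoretic: set $G:=F[H]^*\cap N$. Since every element of $F[H]^*$ normalizes $F[H]^*$, we get $G\subseteq N_N(F[H]^*)=M$, so $G/(G\cap F^*)$ is locally finite; moreover $G$ is (sub)normal in $F[H]^*\simeq \mathrm{GL}_{n_1}(D_1)\times\cdots\times \mathrm{GL}_{n_s}(D_s)$, using the semisimplicity you already established. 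Projecting to each factor yields a (sub)normal subgroup $G_i$ of $\mathrm{GL}_{n_i}(D_i)$ which is locally finite over the center $F_i^*$. If some $G_i$ were non-abelian, then for $n_i=1$ Herstein's theorem on torsion subnormal subgroups of $D_i^*$ together with \cite[14.4.4]{scott} gives a contradiction, while for $n_i\geq 2$ the structure result in \cite[p.~154]{skew} forces $D_i$, hence $F$, to be locally finite, so $D$ is algebraic over a finite field and Jacobson's theorem gives $D=F$, contradicting noncommutativity of $D$. Hence all $G_i$ are abelian, so $G$, and with it $H\subseteq G$, is abelian, and $F[H]$ is a finite product of fields. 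In other words, in the branch $G_0=M$ the correct outcome is the third alternative of the trichotomy, not the contradiction (or the conclusion $A=M_n(D)$) you were aiming for; you never exploit the decisive fact $F[H]^*\cap N\subseteq M$.

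A secondary slip: the assertion that a noncommutative $D$ forces $Z(D)$ to be infinite is false in general (e.g.\ skew function fields over $\mathbb{F}_p$ built from an automorphism of infinite order can have center $\mathbb{F}_p$). In your situation it is repairable, but only via the same mechanism as above: you already know $D$ is locally finite-dimensional over $F$, so a finite $F$ would make $D$ algebraic over a finite field and hence commutative by Jacobson's theorem. The preliminary steps of your proposal (local finite-dimensionality of $F[H]$, semisimplicity via Clifford's theorem, the Wedderburn decomposition, and the transitivity of $M$ on the blocks) are fine and essentially parallel the paper.
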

\begin{proof}
Since $M\subseteq N_N(F[H]^*)\subseteq N$,
the maximality of $M$ in $N$ implies that, either $N_N(F[H]^*)=N$ or $N_N(F[H]^*)=M$.
Let $G:=F[H]^*\cap N$. In the first case, $G$ is a subnormal subgroup of $\mathrm{GL}_n(D)$.
Consequently, if $H$ is not central, $\mathrm{SL}_n(D)\subseteq G\subseteq F[H]^*$
by~\cite[Lemma~2.3]{gon}, and thus the Cartan-Brauer-Hua theorem for matrix ring implies that $F[H]=M_n(D)$.

Next assume $N_N(F[H]^*)=M$. Then $G\leq M$ and $G$ is a normal subgroup of
$F[H]^*$. On the other hand, since $F[M]=M_n(D)$ and $M/(M\cap F^*)$ is locally finite,
$D$ is a locally finite-dimensional division algebra over $F$. Also, by Clifford's theorem
$H$ is complectly reducible; therefore $F[H]$ is semisimple Artinian  by~\cite[p.~7]{skew}.
Thus, by the Wedderburn-Artin theorem, there exist natural numbers $n_i$ and division rings $D_i$ such that
$$F[H]\simeq M_{n_1}(D_1)\times\cdots\times M_{n_s}(D_s),$$
as $F$-algebras. Now,
$$G\unlhd F[H]^*\simeq \mathrm{GL}_{n_1}(D_1)\times\cdots\times \mathrm{GL}_{n_s}(D_s).$$
Define $\pi_i:G\longrightarrow \mathrm{GL}_{n_i}(D_i)$ by
$\pi_i((g_1,\ldots,g_s))=g_i$, for $1\leq i\leq s$. Clearly,
$\pi_i$ is a group homomorphism. Let $G_i:=\pi_i(G)$. Thus, for
every $i$, $G_i$ is a normal subgroup of $\mathrm{GL}_{n_i}(D_i)$ which is
locally finite over $F_i^*:=Z(D_i)^*\supseteq F^*$ .
If there exists some $i$ such that $G_i$ is non-abelian, then $n_i\geq 2$: for if
$n_i=1$, as $G_i/Z(G_i)$ is locally finite, $G_i'$ is a torsion subnormal subgroup
of $D_i^*$, which by~\cite[Theorem~8]{her} $G_i'\subseteq F_i^*$, which implies that $G_i$
is soluble, and so $G_i$ is also central by~\cite[14.4.4]{scott}, a contradiction. Then, by~\cite[p.~154]{skew},
$D_i$ is a locally finite field and hence $F$ is also locally finite.
Thus $D$ is algebraic over a finite field and hence by Jacobson's
theorem in~\cite[p. 208]{lam}, we obtain $D=F$, which is a
contradiction. Therefore, for every $i$, $G_i$ is abelian and so
$G$ (thus $H$) is an abelian group. This implies that $F[H]\simeq F_1\times\cdots\times F_s$,
and completes the proof.
\end{proof}
\begin{lem} \label{lanski} {\rm(\cite[Theorem~2]{lanski})}
Let $R$ be a prime ring with $1$, $Z=Z(R)$ be the
center of $R$ containing at least five elements,
and $\overline{U}$ the $Z$-subalgebra of $R$ generated by
$R^*$. Assume that $\overline{U}$ contains a
nonzero ideal of $R$. If $N$ is a soluble normal subgroup of $R^*$, then either $R$ is a domain
or $N\subseteq Z$.
\end{lem}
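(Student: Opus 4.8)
The plan is to establish the only substantive assertion, that if $R$ is not a domain then $N\subseteq Z$; when $R$ is a domain the statement is automatic, and this is no accident, since the entire argument is driven by a nonzero square-zero element of $R$, whose existence is exactly equivalent to $R$ not being a domain. Two features of the hypothesis are used where one would expect: $|Z|\ge 5$ supplies enough distinct central scalars for the linearization (Vandermonde) steps, and the fact that $\overline U$ contains a nonzero ideal is what converts ``$a$ centralizes $R^*$'' into ``$a\in Z$'' at the end---indeed, in a prime ring an element $a$ centralizing a nonzero ideal $I$ is central, because then $[a,R]\,I=0$ and primeness forces $[a,R]=0$. First I would reduce the group-theoretic shape of $N$: arguing by induction on the derived length $d$ of $N$, the subgroup $N'$ is characteristic in $N$, hence normal in $R^*$, and soluble of length $\le d-1$, so by induction either $R$ is a domain (and we are done) or $N'\subseteq Z$. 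Thus it suffices to treat $N$ with $[b,c]\in Z$ for all $b,c\in N$, in particular the genuinely abelian case; I describe that case, the central-by-abelian one needing only that one extra central commutator be carried through the bookkeeping.

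\emph{Step 1 (conjugation by square-zero units).} Since $R$ is prime but not a domain, choose $a_0,b_0\ne 0$ with $a_0b_0=0$; by primeness $b_0Ra_0\ne 0$, so $t:=b_0ra_0$ is a nonzero square-zero element for a suitable $r\in R$. For each $\lambda\in Z$ the element $1+\lambda t$ is a unit with inverse $1-\lambda t$, and a direct expansion gives, for $a\in N$, the formula $(1+\lambda t)\,a\,(1-\lambda t)=a+\lambda[t,a]-\lambda^2\,tat$. As this conjugate of $a$ again lies in $N$, in the abelian case it commutes with $a$, whence $\lambda\,[a,[t,a]]-\lambda^2\,[a,tat]=0$ for every $\lambda\in Z$. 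Evaluating at distinct nonzero values of $\lambda$ and inverting the resulting Vandermonde matrix over $Z$ isolates $[a,[t,a]]=0$ and $[a,tat]=0$, valid for \emph{every} square-zero $t\in R$ (the central-by-abelian case gives instead $[a,[t,a]]\in Z$ and $[a,tat]\in Z$). This is the quantitative role of the center; keeping track of the most demanding linearization that arises, in particular after the further manipulations of Step 2, is what pins the requirement at $|Z|\ge 5$ and calls for a separate direct treatment of the two smallest fields.

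\emph{Step 2 (from square-zero directions to centrality).} The first identity says exactly that the inner derivation $\delta_a=[a,-]$ satisfies $\delta_a^2=0$ on every square-zero element, hence on the additive subgroup they generate; together with $\delta_a(Z)=0$, and after feeding the identities back in and linearizing (this is immediate when $R=M_n(D)$, and in general is arranged by the ring-of-quotients reduction described below), one reaches $\delta_a^2=0$ on all of $R$. But a prime ring of characteristic $\ne 2$ has no nonzero derivation of square zero: from $\delta^2=0$ one gets $2\,\delta(x)\delta(y)=\delta^2(xy)-\delta^2(x)y-x\,\delta^2(y)=0$, so $\delta(R)\,R\,\delta(R)=0$ and $\delta=0$ by primeness. (Characteristic $2$ is disposed of by the companion identity $[a,tat]=0$ and, again, $|Z|\ge 5$.) Hence $\delta_a=0$, so $a$ centralizes $R$, a fortiori $R^*$, hence the $Z$-algebra $\overline U$ it generates, hence a nonzero ideal of $R$; so $a\in Z$ by the remark above. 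In the central-by-abelian case the same computation yields only $\delta_a^2(R)\subseteq Z$ and thence $a^2\in Z$, after which conjugation-invariance of the coset $aZ$ together with $|Z|\ge 5$ to extract a square root upgrades this to $a\in Z$. As $a\in N$ was arbitrary, $N\subseteq Z$.

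\emph{The main obstacle.} The crux is Step 2: turning control of the square-zero perturbations of $a$ into outright centrality. For $R=M_n(D)$ this is a concrete finite computation, since the square-zero matrices are explicit and, together with the center and the image of $\delta_a$, account for all of $M_n(D)$; but for an arbitrary prime ring one must either bring in the structure theory of derivations and Lie ideals of prime rings (Herstein; Lanski and Montgomery) in earnest, or pass to the Martindale symmetric ring of quotients $Q=Q_s(R)$---checking that $Q$ is again prime, that its center is a field $C\supseteq Z$ still of cardinality $\ge 5$, that the $C$-algebra generated by $Q^*$ still contains a nonzero ideal of $Q$, and that the normal closure of $N$ in $Q^*$ remains soluble---and run Steps 1--2 inside $Q$ before descending. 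Carrying the central commutator of the central-by-abelian case correctly through each linearization, and nailing down the exact scalar count that forces the bound five (and hence the separate handling of the two smallest centers), are the remaining delicate points.
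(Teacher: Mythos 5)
Two preliminary remarks. First, the paper does not prove this statement at all: it is quoted verbatim, with proof, from \cite[Theorem~2]{lanski}, so there is no ``paper's proof'' to compare against; the only question is whether your outline would stand on its own as a proof of Lanski's theorem. It would not, because its central step is left open. Your Step~1 is fine (for a square-zero $t$ and $\lambda\in Z$ the unit $1+\lambda t$ conjugates $a$ to $a+\lambda[t,a]-\lambda^{2}tat$, and since nonzero central elements of a prime ring are not zero divisors, two distinct nonzero scalars suffice to separate the coefficients). But Step~2 is the theorem. The identities you obtain give $\delta_a^{2}=0$ only on square-zero elements and hence on their additive span, and that span is in general a proper subgroup, indeed essentially a Lie ideal: already in $M_n(F)$ it is the set of trace-zero matrices, so the parenthetical claim that the passage to $\delta_a^{2}=0$ on all of $R$ is ``immediate when $R=M_n(D)$'' is itself inaccurate as stated, and for a general prime ring you explicitly defer the step to Herstein/Lanski--Montgomery Lie-ideal and derivation theory, or to a Martindale-quotient reduction whose required verifications you list but do not carry out. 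Since everything after this point depends on $\delta_a^{2}=0$ on $R$, the argument has a genuine hole exactly where the real work lies.

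There are further unresolved points that you yourself flag as ``delicate'' but which are not optional: the characteristic~$2$ case is dismissed in one sentence with no argument; in the central-by-abelian case (which your induction on derived length makes unavoidable) you reach only $a^{2}\in Z$, and ``extracting a square root'' is not a proof --- in general $a^{2}\in Z$ does not force $a\in Z$, so an actual argument using normality of $N$ and primeness of $R$ is needed here; and the hypothesis $|Z|\ge 5$ is never located, since your Vandermonde manipulations need only three central elements, and you give no computation that produces the bound five (in the literature such bounds typically enter through the solubility of small linear groups such as $\mathrm{SL}_2$ over the fields with at most three elements, not through a scalar count). The use of the hypothesis that $\overline U$ contains a nonzero ideal is also muddled: once you claim $\delta_a=0$ on all of $R$ you have $a\in Z$ outright, so as written that hypothesis does no work, whereas it must enter precisely when one only controls the action of $a$ on units. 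In short, the strategy (square-zero perturbations, Vandermonde separation, Herstein-type facts on derivations) is reasonable and in the spirit of the ring-theoretic literature, but the proposal is an outline with the decisive step and several side cases missing, not a proof.
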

We are now in a position to complete the proof of Theorem~A as follows.
\begin{thm}\label{nil2}
Let $D$ be an infinite division ring, $N$ a subnormal subgroup
of $\mathrm{GL}_n(D)$, $n\geq 2$, and $M$ a nilpotent maximal subgroup of $N$.
If the center of $D$ contains at least five elements, then $M$ is abelian.
\end{thm}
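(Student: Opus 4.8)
The plan is to combine the $n=1$ case (Theorem~\ref{thm1}) with the structural dichotomies supplied by Lemmas~\ref{main} and~\ref{lanski}, reducing everything to an analysis of the $F$-linear hull $F[M]$ of the nilpotent maximal subgroup $M$. First I would record the standard reduction: since $M$ is maximal in $N$ and $N$ is subnormal in $\mathrm{GL}_n(D)$, either $M$ is irreducible or $F[M]$ is contained in a proper parabolic-type subalgebra; the reducible case should be handled by an inductive/Clifford argument showing that $M$ acts on a block decomposition with a nilpotent maximal subgroup appearing in a smaller $\mathrm{GL}_{n_i}(D_i)$ (or in $D_i^*$), where Theorem~\ref{thm1} or induction on $n$ applies. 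So I may assume $M$ is irreducible, and then — again using maximality of $M$ in $N$ via the normalizer trick $M\subseteq N_N(F[M']^*)\subseteq N$ — I would like to assume $M$ is absolutely irreducible, i.e. $F[M]=M_n(D)$; the alternative $F[M]\subsetneq M_n(D)$ means $M$ normalizes a proper simple subalgebra and one descends to a smaller matrix ring over a (possibly larger) division ring.

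With $M$ absolutely irreducible, the key point is that a nilpotent group has $M/Z(M)$ torsion only in degenerate situations, so I would instead exploit nilpotence directly: from \cite[Proposition~1.1]{nilpotent} (or the analogue used in Theorem~\ref{thm1}) $M'$ is abelian, hence $M$ is metabelian, and in particular $M$ is soluble. Now apply Lemma~\ref{lanski} with $R=M_n(D)$: its center $F$ has at least five elements by hypothesis, $R^*=\mathrm{GL}_n(D)$ generates $M_n(D)$ over $F$ (so $\overline U=R$ contains the ideal $R$), and $M\trianglelefteq M$ is... — more carefully, I would apply Lemma~\ref{lanski} not to $M$ itself but to a suitable normal soluble subgroup of $F[H]^*$ for $H$ a term of the upper central series or for $H=M'$, after using Lemma~\ref{main} to split $F[H]$. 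Concretely: take $H=Z_2(M)$ (or the last non-central term before $M$ in a central series); then $H\trianglelefteq M$, $M/(M\cap F^*)$ is a quotient of the nilpotent-by-(torsion?) ... here is where I must be careful, so let me phrase it as: apply Lemma~\ref{main} to the normal subgroup $H=M'$ of $M$. Since $M$ is absolutely irreducible, Lemma~\ref{main} gives $M'\subseteq F^*$, or $F[M']=M_n(D)$, or $F[M']\cong F_1\times\cdots\times F_s$ a product of fields.

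The three cases are then dispatched as follows. If $M'\subseteq F^*$, then $M$ is centre-by-abelian; combined with nilpotence and irreducibility over an infinite $D$, the commutator map $M\times M\to F^*$ is alternating bilinear with values in the (infinite) group $M\cap F^*$, and a standard argument (as in \cite{nil}) forces $M'=1$, so $M$ is abelian. If $F[M']=M_n(D)$, then $M'$ is already absolutely irreducible; but $M'$ is abelian (being a term of the derived series of the metabelian group $M$), so $M_n(D)=F[M']$ is commutative, forcing $n=1$ and $D$ commutative — contradicting $n\geq 2$. If $F[M']\cong F_1\times\cdots\times F_s$, then $M'$ is abelian and $C:=C_{M_n(D)}(F[M'])$ is a nontrivial subalgebra normalized by $M$; since $M$ centralizes $M'$ modulo... — actually since $M$ is nilpotent, $M'$ lies in $Z_{c-1}(M)$ and each generator of $M$ acts on the semisimple algebra $F[M']$; by maximality $M=N_N(F[M']^*)$ would make $F[M']^*\cap N$ abelian-normal and push things down, while $M=N$ via $F[M']^*$ is impossible since $F[M']$ is not all of $M_n(D)$ — so one extracts a proper $M$-invariant decomposition of $D^n$ and descends, contradicting irreducibility of $M$ unless $M'=1$.

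The main obstacle, as the above hedging makes clear, is the passage from irreducible to absolutely irreducible and the bookkeeping when $F[M]$ (or $F[M']$) is a proper semisimple subalgebra: one must track how nilpotence of $M$, maximality of $M$ in $N$, subnormality of $N$, and the hypothesis $|F|\geq 5$ interact when $M$ is pushed into $\mathrm{GL}_{n_i}(D_i)$ for smaller $n_i$, ensuring the induction hypothesis (or Theorem~\ref{thm1} when some $n_i=1$) genuinely applies, i.e. that the image of $M$ really is a \emph{nilpotent maximal} subgroup of a subnormal subgroup in the smaller group and that the five-element hypothesis survives (it does, since $Z(D_i)\supseteq F$). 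The clean way to avoid case (ii) becoming circular is to invoke Lemma~\ref{lanski} exactly at the absolutely irreducible stage: $M$ soluble normal in itself does not help, but $G=F[M']^*\cap N$ is soluble and normal in $F[M']^*$, and when $F[M']=M_n(D)$ Lemma~\ref{lanski} forces $G\subseteq F^*$ hence $M'$ central, looping back to case (i). I expect the write-up to mirror the proof of Lemma~\ref{metabelian} closely, with Lemma~\ref{lanski} playing the role that \cite[13.3.8]{scott} played there.
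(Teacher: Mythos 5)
Your proposal has genuine gaps at exactly the points where the paper does its real work. The first is structural: maximality of $M$ in $N$ does not descend to blocks or to smaller matrix rings, so both your treatment of the reducible case (``an inductive/Clifford argument with a nilpotent maximal subgroup appearing in a smaller $\mathrm{GL}_{n_i}(D_i)$'') and of the case $F[M]\subsetneq M_n(D)$ (``one descends to a smaller matrix ring'') are assertions without an argument. The paper's dichotomy is different: since $M\subseteq F[M]^*\cap N\subseteq N$, either $M=F[M]^*\cap N$ or $N\subseteq F[M]^*$. In the first case $M$ is normal in $R^*$ with $R=F[M]$; reducibility is excluded because it would place a copy of $D^*$ inside the nilpotent group $M$ (\cite[Lemma~1]{polynomial}), and in the irreducible case $R$ is prime and Goldie (Lemma~\ref{gold1}), Lemma~\ref{lanski} forces $R$ to be a domain (or $M$ central), and $M$ then sits as a nilpotent normal subgroup of $\Delta^*$ for the Ore quotient division ring $\Delta\subseteq M_n(D)$, hence is abelian; the alternative $N\subseteq\Delta^*$ is killed by Cartan--Brauer--Hua since $n\ge 2$. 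Only in the second case does one get $F[M]=M_n(D)$ (via $\mathrm{SL}_n(D)\subseteq N$). None of this appears in your sketch, and you also omit the opening reduction to $[D:F]=\infty$ via \cite[Corollary~1]{fc subnormal}, which is what all the later ``$[D:F]<\infty$, contradiction'' steps lean on.

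In the absolutely irreducible case you apply Lemma~\ref{main} without verifying its hypothesis that $M/(M\cap F^*)$ is locally finite; this is precisely the center-by-(locally finite) fact from \cite[Theorem~5.7.11]{skew} that you set aside, and it is also what makes elements of $M$ algebraic over $F$ and $D$ locally finite-dimensional. Your case $M'\subseteq F^*$ has no proof: ``centre-by-abelian plus a standard argument forces $M'=1$'' fails for abstract nilpotent groups (Heisenberg-type groups), and the paper's actual argument applies Lemma~\ref{main} to $F^*\langle x,y\rangle$ for non-commuting $x,y$, deduces $F[x,y]=M_n(D)$, and contradicts $[D:F]=\infty$. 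Your case $F[M']\simeq F_1\times\cdots\times F_s$ ends with ``contradicting irreducibility of $M$ unless $M'=1$,'' which is false: an irreducible group can have a normal abelian subgroup whose hull is a finite product of fields (imprimitive/monomial groups); the paper instead uses the finitely many roots of the minimal polynomial of $x\in M'\setminus F^*$ to get $[M:C_M(x)]<\infty$, produces an abelian normal subgroup of finite index via Lemma~\ref{main}, and concludes with the PI/Kaplansky contradiction. Finally, your reliance on $M'$ abelian via \cite[Proposition~1.1]{nilpotent} is only invoked in the paper for $n=1$, and the exclusion of $F[M']=M_n(D)$ is done there by \cite[Lemma~11]{j.algebra}, not by metabelianity; so this step, too, would need justification in your route.
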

\begin{proof}
By~\cite[Corollary~1]{fc subnormal} we may assume $D$ is infinite-dimensional
over $F$. Let $R:=F[M]$. Since $M\subseteq R\cap N\subseteq N$, by the maximality of $M$ in $N$ we consider
the following two cases:\vspace{.3cm}\\
{\bf Case~1.} Suppose $M=R\cap N$. Then $M$ is a normal subgroup of $R^*$. On the other hand,
if $M$ is reducible, then it contains an isomorphic copy of $D^*$ by~\cite[Lemma~1]{polynomial};
so $D$ is a field, a contradiction. Assume
that $M$ is irreducible; thus $R$ is a prime ring by~\cite[1.1.14]{skew}
and  Goldie by~Lemma~\ref{gold1}.
Moreover, since the $Z(R)$-subalgebra of $R$ generated
by $R^*$ is $R$ itself, we can use of~Lemma~\ref{lanski}
to deduce that either $R$ is a domain or $M\subseteq Z(R)$. But, in the first case
$R$ is in fact an Ore domain.
Denote the classical quotient ring of $R$ by $\Delta$; then $\Delta$ is a
division ring contained in $M_n(D)$ by~\cite[Theorem~5.7.8]{skew}.
If $N=\Delta\cap N$, then $N\subseteq\Delta^*$,
so the Cartan-Brauer-Hua theorem for matrix ring implies that $\Delta=M_n(D)$ which is
impossible since $n\geq 2$. Therefore $M=\Delta\cap N$; thus $M$ as a nilpotent normal
subgroup of $\Delta^*$ is abelian.\vspace{.3cm}\\
{\bf Case~2.} In this case, we consider the case $N=R \cap N$.
Thus $\mathrm{SL}_n(D)\subseteq N\subseteq R^*$, so $R=M_n(D)$.
Therefore, $M$ is center-by-(locally finite) by~\cite[Theorem~5.7.11]{skew}. Clearly
$Z(M)=M\cap F^*$, so $M/(M\cap F^*)$ is locally finite.

First we assume $M'\subseteq F^*$. Given $x, y\in M$ such that $xy\neq yx$, we
have $F^*\langle x, y\rangle \unlhd M$ (note that we may assume $F^*\subseteq M$,
since otherwise, we can replace $M$ by $F^*M$ and $N$ by $F^*N$). Hence,
by~Lemma~\ref{main}, $F[\langle x,y\rangle]=M_n(D)$. Since $x$ and $y$ are algebraic over $F$,
we have $F[\langle x, y\rangle]=F[x,y]$, consequently, $F[x,y]=M_n(D)$. So $[D:F]<\infty$ since $D$
is locally finite-dimensional over $F$; this contradicts our assumption and proves
that $M$ is abelian.

Next we assume $M'\not\subseteq F^*$. Let $K=F[M']$. Then by~Lemma~\ref{main} and
\cite[Lemma~11]{j.algebra},  $K=F_1\times\cdots\times F_s$
for some natural number $s$ and fields $F_i\supseteq F$. Suppose $x\in M'\setminus F^*$ and
let $f(t)\in F[t]$ be the minimal polynomial of $x$ over $F$.
Since $M\subseteq N_{\mathrm{GL}_n(D)}(K)$, every conjugate of $x$ with
respect to $M$ is in $K$ and as well is a root of $f(t)$. Since $f(t)$ has a finite number of roots in $K$ we
have $[M:C_M (x)] <\infty$. Therefore, there is a normal subgroup
$H$ of $M$ such that $H\subseteq C_M(x)$, and
$|M/H|<\infty$. If $F[H]=M_n(D)$, since $H\subseteq C_M (x)$, every element of
$M_n(D)$ commutes with $x$. So $x\in F$ which conflicts with the choice of
$x$. Thus by~Lemma~\ref{main} we may assume $H$ is abelian.
Therefore, $M$ is abelian-by-finite and so, by
Lemma~1.11 of \cite[p. 176]{passman}, the group ring $FM$ satisfies
a polynomial identity. Therefore $F[M]=M_n(D)$ as a homomorphic image
of $FM$, satisfies a polynomial identity too. So by Kaplansky's theorem
in~\cite[p. 36]{rowen} we conclude that $[D:F]<\infty$, a contradiction.
This finishes the proof.
\end{proof}
\section{Polycyclic-by-finite maximal subgroups}
The principal aim of this section is to prove Theorem~B. The main step in the proof
is to show that the result holds in the case $n=1$. For our purposes, we need several
lemmas as follows.
\begin{lem}\label{lem3}
Let $D$ be an infinite division ring. Then $\mathrm{GL}_n(D)$ cannot be a polycyclic-by-finite group.
\end{lem}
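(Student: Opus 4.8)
The strategy is to exhibit a subgroup of $\mathrm{GL}_n(D)$ that fails to be finitely generated; since in a polycyclic-by-finite group every subgroup is finitely generated (the maximal condition on subgroups holds), this immediately yields the claim, and uniformly in $n$. The subgroup I have in mind is a group of scalar matrices: if $L$ is any subfield of $D$, then $\{\lambda I_n:\lambda\in L^{*}\}$ is a subgroup of $\mathrm{GL}_n(D)$ isomorphic to $L^{*}$. So it suffices to locate an \emph{infinite} subfield $L\subseteq D$ together with the classical fact that the multiplicative group of an infinite field is never finitely generated.

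For that fact, suppose $L$ is a field with $L^{*}=\langle a_{1},\dots,a_{m}\rangle$. Then every nonzero element of $L$, being a product of integer powers of the $a_i$, lies in the subring $\mathbb{Z}[a_{1},a_{1}^{-1},\dots,a_{m},a_{m}^{-1}]$; adjoining $0$, we see that $L$ is finitely generated as a ring. But a field that is finitely generated as a ring is finite: in characteristic $p$ it is a finitely generated algebra over $\mathbb{F}_p$ and hence, by the Nullstellensatz, a finite extension of $\mathbb{F}_p$; in characteristic $0$ it would have to contain $\mathbb{Q}$ with all infinitely many rational primes inverted inside a finitely generated $\mathbb{Z}$-algebra, which is impossible. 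So $L$ would be finite, a contradiction.

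It remains to produce an infinite subfield of $D$. Write $F=Z(D)$. If $F$ is infinite, take $L=F$. If $F$ is finite, then $D$ is not algebraic over $F$: a division ring algebraic over a finite central subfield is commutative by Jacobson's theorem, which would force $D=F$ to be finite, against the hypothesis. Hence there is $a\in D$ transcendental over $F$; since $a$ commutes with the central subfield $F$, the subring $F[a]$ is isomorphic to the polynomial ring $F[t]$, and its field of fractions computed inside $D$ is an infinite commutative subfield $L\cong F(t)$. Combining the three steps, $\{\lambda I_n:\lambda\in L^{*}\}$ is a non-finitely-generated subgroup of $\mathrm{GL}_n(D)$, so $\mathrm{GL}_n(D)$ cannot be polycyclic-by-finite.

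The only step carrying genuine content is the construction of the infinite subfield when $Z(D)$ happens to be finite, where the sole nontrivial input is Jacobson's commutativity theorem; everything else is standard group theory and a standard fact about finitely generated fields.
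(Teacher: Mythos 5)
Your proof is correct, and it takes a genuinely different route from the paper's. The paper argues top-down: if $H$ is a polycyclic normal subgroup of finite index, then $H$ is a finitely generated subnormal subgroup of $\mathrm{GL}_n(D)$ and hence central (citing the Mahdavi-Hezavehi--Mahmudi--Yasamin result), so $D^*/F^*$ is torsion, forcing $D=F$ to be an infinite field; the contradiction then comes from finite generation of $\mathrm{GL}_n(F)$ (again citing that paper for $n\geq 2$, and the multiplicative-group-of-a-field fact for $n=1$). You instead use the maximal condition on subgroups enjoyed by polycyclic-by-finite groups and exhibit a single non-finitely-generated abelian subgroup: the scalar matrices over an infinite subfield $L\subseteq D$, whose existence when $Z(D)$ is finite rests only on Jacobson's commutativity theorem (which the paper also quotes elsewhere). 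Your approach is more self-contained and treats all $n$ uniformly, avoiding both the centrality theorem for finitely generated subnormal subgroups and the radical-over-the-center argument; the paper's proof is shorter given that this machinery is already in its toolkit. One small remark: your characteristic-zero step ``a field finitely generated as a ring is finite'' is stated a bit tersely (strictly one needs an Artin--Tate type argument to pass from $L$ finitely generated over $\mathbb{Z}$ to $\mathbb{Q}$ finitely generated over $\mathbb{Z}$); since $L^*$ is abelian you could bypass this entirely by noting that $\mathbb{Q}^*$, a non-finitely-generated subgroup, embeds in $L^*$ in characteristic zero, while in characteristic $p$ either $L$ contains $\mathbb{F}_p(t)$, whose multiplicative group is free abelian of infinite rank modulo torsion, or $L^*$ is infinite torsion abelian; either way $L^*$ is not finitely generated. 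This is a cosmetic tightening, not a gap.
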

\begin{proof}
Suppose $H$ is a polycyclic normal subgroup of $\mathrm{GL}_n(D)$
and $\mathrm{GL}_n(D)/H$ is finite. It is known that $H$ must be central, so
$\mathrm{GL}_n(D)/F^*$ is finite and therefore $D^*/F^*$ is a torsion group. Consequently
$D=F$ is an infinite field. On the other hand, since every polycyclic-by-finite group
is finitely generated, $\mathrm{GL}_n(D)$ is finitely generated. This is impossible:
for $n\geq 2$ use \cite[Corollary~1]{f.g}, and for $n=1$ use the fact that
the multiplicative group of a field cannot be finitely generated unless the field is finite.
\end{proof}
\begin{lem}\label{a-by-f}
Let $D$ be an infinite division ring and $M$ be a
polycyclic-by-finite maximal subgroup of $\mathrm{GL}_n(D)$. Then $M$ cannot be abelian-by-finite.
\end{lem}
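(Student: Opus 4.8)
The plan is to suppose, toward a contradiction, that $M$ is abelian-by-finite, and to conclude that $D$ must be finite. Fix an abelian normal subgroup $A \trianglelefteq M$ with $[M:A] < \infty$; since $M$ is polycyclic-by-finite it is finitely generated, so $A$ is too, and by Lemma~1.11 of \cite[p.~176]{passman} the group algebra $FM$ --- hence its homomorphic image $R := F[M] \subseteq M_n(D)$ --- satisfies a polynomial identity. I would then split on reducibility of $M$. If $M$ is reducible, then $n \geq 2$ and, by \cite[Lemma~1]{polynomial}, $M$ contains an isomorphic copy of $D^*$; so $D^*$ is polycyclic-by-finite (hence finitely generated) and soluble (being abelian-by-finite), whence $D^* = Z(D)^* = F^*$ by \cite[14.4.4]{scott}, so $D$ is a field whose multiplicative group is finitely generated --- forcing $D$ finite, a contradiction. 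Thus $M$ is irreducible, and $R$ is a prime PI-ring by \cite[1.1.14]{skew}.

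By Posner's theorem $R$ is a Goldie ring whose classical ring of quotients $Q$ is a simple Artinian ring, finite-dimensional over its center; since $R$ lies in the Artinian ring $M_n(D)$, so does $Q$, and $Q^* := Q \cap \mathrm{GL}_n(D)$ is a subgroup of $\mathrm{GL}_n(D)$ containing $M$. By maximality $Q^* = M$ or $Q^* = \mathrm{GL}_n(D)$. In the second case $Q = M_n(D)$, whose center is $F$, so $\dim_F D < \infty$. In the first case $M$ is the whole unit group of a simple Artinian ring $Q \cong M_r(E)$; since $M = Q^*$ is abelian-by-finite, either $E$ is finite, or $r = 1$ and $E$ is commutative, in which case $M = E^*$ is finitely generated and $E$ is finite after all. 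Either way $Q$, hence $F \subseteq Z(Q)$, is finite and $M$ is finite, which I expect to dispose of --- together with the residual finite-center situations arising below --- by observing that a finite maximal subgroup would make $\mathrm{GL}_n(D)$ finitely generated, impossible for an infinite $D$ by \cite[Corollary~1]{f.g} for $n \geq 2$ and by the finiteness of a division ring with finitely generated multiplicative group for $n = 1$ (with Jacobson's theorem \cite[p.~208]{lam} covering the locally finite case).

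The heart of the matter is therefore the case $\dim_F D < \infty$. Here I would run a normalizer reduction: $M$ normalizes $A$, hence the commutative subring $F[A]$ --- or, when $n = 1$, the division subring $F(A)$ --- and its group of units, which is a subgroup of $\mathrm{GL}_n(D)$ normalized by $M$; so by maximality its normalizer in $\mathrm{GL}_n(D)$ is $M$ or all of $\mathrm{GL}_n(D)$. If it is $\mathrm{GL}_n(D)$, then the classification of the normal subgroups of $\mathrm{GL}_n(D)$ for $n \geq 2$ --- using that $\mathrm{SL}_n(D)$ spans $M_n(D)$ over $F$ when $|F| \geq 5$ --- or the Cartan--Brauer--Hua theorem for $n = 1$ forces $A \subseteq F^*$; then $M/(M \cap F^*)$ is finite, so $F^*M$ is $M$ or $\mathrm{GL}_n(D)$, and in each case finite generation of $M$ makes $\mathrm{PGL}_n(D)$ --- or a finite-index subgroup of $F^*$ --- finitely generated, contradicting the infiniteness of $D$ exactly as in the proof of Lemma~\ref{lem3}. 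If instead the normalizer equals $M$, then the relevant unit group is a finitely generated, finite-index subgroup of $M$ and the unit group of a subring of $M_n(D)$ containing $F$; since $F^*$ embeds into it, $F^*$ is finitely generated, $F$ is finite, and then $\dim_F D < \infty$ makes $D$ finite --- the contradiction sought.

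The step I expect to be the main obstacle is this finite-dimensional case: the contradiction has to be extracted purely from the finite generation of $M$, and the delicate point is to organize the sequence of normalizer/maximality alternatives so that it terminates, genuinely forcing $A$ into $F^*$ (or $F$ into finiteness) rather than reproducing a similar configuration one level down. Dealing with the branch where $F[A]^*$ is normalized by $M$ but is not normal in $\mathrm{GL}_n(D)$ without looping is where the real work lies, and I anticipate having to iterate the construction with the centralizer of $A$ replacing $A$, using that in the finite-dimensional setting $F[A]$ is central in the subalgebra generated by its own centralizer.
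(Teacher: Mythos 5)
There is a genuine gap, and it sits exactly where you dispose of the case in which $M$ comes out finite. In your branch $Q^*=M$ you reduce to $Q\cong M_r(E)$ with $E$ finite, hence $M$ finite and $F\subseteq Z(Q)$ finite, and you then argue that a finite (indeed any finitely generated) maximal subgroup forces $\mathrm{GL}_n(D)=\langle M,g\rangle$ to be finitely generated, which you declare impossible, citing \cite[Corollary~1]{f.g} for $n\geq 2$ and ``a division ring with finitely generated multiplicative group is finite'' for $n=1$. That blanket non-finite-generation statement is not available for an arbitrary infinite division ring: the results of \cite{f.g} concern division rings of finite dimension over their centers (which is precisely why the paper quotes \cite[Corollary~3]{f.g} only to remove the finite-dimensional case at the start of Lemma~\ref{a-by-f}, and quotes \cite[Corollary~1]{f.g} in Lemma~\ref{lem3} only after that argument has already reduced matters to $D=F$ a field), and in your branch $D$ may be infinite-dimensional over a \emph{finite} center, where neither \cite{f.g} nor Jacobson's theorem helps. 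Note also that if ``$\mathrm{GL}_n(D)$ is never finitely generated for infinite $D$'' were known, all of Theorem~B would follow in two lines from $\mathrm{GL}_n(D)=\langle M,g\rangle$; the paper can only prove the much weaker Lemma~\ref{lem3} (``not polycyclic-by-finite''), whose proof goes through solubility of a polycyclic normal subgroup and does not extend to general finitely generated groups. The correct way to kill the finite-$M$ case is the paper's: quote \cite[Lemma~9]{akbari n}, which says a maximal subgroup of $\mathrm{GL}_n(D)$ with $D$ infinite cannot be finite. (Minor related point: the embedding of the quotient ring $Q$ into $M_n(D)$ should be justified by \cite[Theorem~5.7.8]{skew}, as the paper does, rather than by the bare fact that $R$ sits in an Artinian ring.)

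The second soft spot is the finite-dimensional case, which you yourself flag as unresolved (the ``looping'' worry). The paper simply cites \cite[Corollary~3]{f.g} here; if you want a self-contained argument, your sketch does close, and without any iteration: let $U$ be the unit group of the commutative ring $F[A]$, so $M$ normalizes $U$ and $U$ normalizes itself, hence $U\leq N_{\mathrm{GL}_n(D)}(U)$, and maximality gives either $U\leq M$ --- whence $F^*\leq M$ is finitely generated, so $F$ is finite and $[D:F]<\infty$ makes $D$ finite --- or $U$ is an abelian normal, hence central, subgroup of $\mathrm{GL}_n(D)$, so $A\subseteq F^*$, $M/(M\cap F^*)$ is finite, and the alternative $F^*M=M$ or $F^*M=\mathrm{GL}_n(D)$ leads to a contradiction as in Lemma~\ref{lem3}. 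Finally, one slip in your reducible case: abelian-by-finite does not imply soluble, so you cannot apply \cite[14.4.4]{scott} to $D^*$ directly; instead note that the abelian normal subgroup of finite index in $D^*$ is central by \cite[14.4.4]{scott}, so $D^*/F^*$ is torsion and $D$ is a field, after which finite generation of $D^*$ finishes that case.
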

\begin{proof}
By~\cite[Corollary~3]{f.g}, we may assume $D$ is of infinite-dimensional division algebra
over its center. Suppose on the contrary that $M$ is abelian-by-finite.
Then, by Lemma~1.11 in~\cite[p. 176]{passman}, $F[M]$
satisfies a polynomial identity. If $F[M]=M_n(D)$, we use Kaplansky's theorem
in~\cite[p. 36]{rowen} to obtain $[D:F]<\infty$, a contradiction.

Now suppose $F[M]^*=M$. By~\cite[Lemma~1]{polynomial}, $M$ is irreducible
and so $F[M]$ is a prime ring. Let $F_1:=C_{M_n(D)}(M)$ and recall that $F_1$ is a division ring by~\cite[Lemma~8]{akbari n}.
We claim that $F_1$ is a field. Let $x\in F_1'$. Now, by the maximality of $M$ in $\mathrm{GL}_n(D)$,
either $\langle x,M \rangle =M$ or $\langle x,M \rangle = \mathrm{GL}_n(D)$.
In the first case we have $x\in M\cap F_1$ and so $x\in Z(M)$. In
the second case we obtain $x\in F^*$. Hence in any case we have
$x\in F^*Z(M)$ and so $F_1'\subseteq F^*Z(M)$. This means that
$F_1'$ is abelian. So, $F_1^*$ is soluble and hence $F_1$ is a field and the claim is established.
Also, by the maximality of $M$ in $\mathrm{GL}_n(D)$ (and by similar argument as in
the proof of~Lemma~\ref{lem3}), we may assume $F_1^*\subseteq M$. Consequently, $F[M]$ is a prime PI-ring
whose center $F_1$ is a field and therefore, by~\cite[Corollary~1.6.28]{rowen}, it is a simple
ring. So, again by Kaplansky's theorem we
have $F_1[M]\simeq M_m(\Delta)$ for some natural number $m$ and a division
ring $\Delta$. Thus $M=F_1[M]^*\simeq\mathrm{GL}_m(\Delta)$. If $\Delta$ is finite,
$M$ is also finite; this is impossible by~\cite[Lemma~9]{akbari n}. Thus $\Delta$ is an
infinite division ring; but this contradicts Lemma~\ref{lem3} and completes the proof.
\end{proof}
\begin{lem}\label{wehr08} {\rm(\cite[3.11]{wehr08})}
Let $G$ be a locally nilpotent subgroup of the multiplicative group
$D^*$ of the division ring $D$. Suppose also that $H=N_{D^*}(G)$,
$E=C_D(G)$, and $D=E(G)$. Denote the maximal $2$-subgroup of $G$ by $Q$.
Then one of the following holds:
\begin{enumerate}
  \item [\rm(i)] $T$ (the maximal locally finite normal subgroup of $G$) is abelian and $H/GE^*$ is abelian;
  \vspace{-.18cm}
  \item [\rm(ii)] $G=Q\cdot C_G(Q)$ where $Q$ is quaternion of order $8$ and
$H/GE^*\simeq Sym(3)\times Y$ for $Y$ abelian;
\vspace{-.18cm}
  \item [\rm(iii)] $G\neq Q\cdot C_G(Q)$ where $Q$ is quaternion of order $8$ and $H/GE^*$
is abelian;
\vspace{-.18cm}
  \item [\rm(iv)] $Q$ is non-abelian with $|Q|>8$ and $H/GE^*$ has an abelian subgroup
$Y$ with index in $H/GE^*$ at most $2$ ($1$ if $Q$ is infinite).
\end{enumerate}
\end{lem}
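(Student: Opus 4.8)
The plan is to reduce the whole classification to a computation of the outer automorphism group of $G$ together with a determination of which outer automorphisms are actually realised by conjugation inside $D^*$. Since $E=C_D(G)$ centralises $G$ we have $E^*\le H$, and of course $G\le H=N_{D^*}(G)$, so $GE^*\le H$. Conjugation gives a homomorphism $H\to\mathrm{Aut}(G)$ whose kernel is exactly $C_{D^*}(G)=E^*$; under it $GE^*/E^*\cong G/Z(G)$ maps onto $\mathrm{Inn}(G)$, since $G\cap E^*=Z(G)$. Hence $H/GE^*$ embeds in $\mathrm{Out}(G)$, and its image is precisely the group of outer automorphisms of $G$ induced by elements of $D^*$. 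Everything therefore comes down to (a) identifying the non-abelian part of $\mathrm{Out}(G)$ for a locally nilpotent $G\le D^*$, and (b) deciding which outer automorphisms are realisable in $D^*$.

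Next I would exploit the structure of $G$ as a locally nilpotent subgroup of a division ring. Its torsion elements form the maximal locally finite normal subgroup $T$, which is the restricted direct product of its primary components $T_p$, and $Q=T_2$ is the maximal $2$-subgroup. By Amitsur's classification of finite subgroups of division rings, each odd primary component $T_p$ is locally cyclic, so $A:=\prod_{p\text{ odd}}T_p$ is an abelian characteristic subgroup, while $Q$ is either locally cyclic (abelian) or locally generalised quaternion; in particular $T$ is non-abelian exactly when $Q$ is. Any automorphism of $G$ acts on $A$ through $\mathrm{Aut}(A)$, which is abelian for locally cyclic $A$, and one checks that the contribution of $A$ and of the torsion-free part of $G$ to the realisable outer automorphisms is abelian. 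Thus the non-abelian part of the realisable $\mathrm{Out}(G)$ is concentrated on $Q$, and the case division of the lemma is forced by the structure of $\mathrm{Out}(Q)$.

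With this reduction in hand the four cases fall out as follows. If $T$ is abelian then $Q$ is abelian and the previous step gives $H/GE^*$ abelian, which is (i). If $T$ is non-abelian then $Q$ is (locally) generalised quaternion, and the split is dictated by $\mathrm{Out}(Q)$: for $Q=Q_8$ one has $\mathrm{Out}(Q_8)\cong\mathrm{Sym}(3)$, whereas for $Q$ quaternion of order $>8$ (respectively infinite) $\mathrm{Out}(Q)$ has an abelian subgroup of index at most $2$ (respectively index $1$). The remaining point is whether the full $\mathrm{Sym}(3)$ survives in $H/GE^*$: it does precisely when $Q$ splits off, i.e.\ $G=Q\cdot C_G(Q)$, giving $H/GE^*\cong\mathrm{Sym}(3)\times Y$ with $Y$ abelian as in (ii); when $G\neq Q\cdot C_G(Q)$ the intertwining of $Q$ with the rest of $G$ permits only those automorphisms of $Q$ compatible on the overlap, collapsing the $\mathrm{Sym}(3)$ to an abelian group and yielding (iii). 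The order-$>8$ and infinite quaternion cases transfer the $\mathrm{Out}(Q)$ bound directly to $H/GE^*$, giving (iv).

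To make this rigorous one passes from $G$ to its finite nilpotent subgroups (legitimate because $G$ is locally nilpotent) and applies the Skolem--Noether theorem inside the finite-dimensional division subalgebra generated by $Q$ over the relevant subfield of $E$, in order to decide exactly which automorphisms of $Q$ are induced by conjugation in $D^*$. I expect the main obstacle to be precisely this realisability analysis, in two forms: first, in the $Q_8$ case, producing elements of $D^*$ that induce the order-$3$ (triality) outer automorphisms of $Q_8$ when and only when $G=Q\cdot C_G(Q)$, which is a genuine Skolem--Noether computation in the quaternionic subalgebra and is what separates (ii) from (iii); and second, establishing the sharp index-$\le 2$ bound (and index $1$ in the infinite case) of (iv), which requires controlling $\mathrm{Out}(Q_{2^n})$ for $n\ge 4$ and passing to the limit, while checking that no larger realisable group arises from the interaction of $Q$ with $A$ and the torsion-free part. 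The real work lies in this bookkeeping of interactions rather than in any single inequality.
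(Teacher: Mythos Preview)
This lemma is not proved in the paper at all; it is quoted verbatim from Wehrfritz \cite[3.11]{wehr08}, as the citation in the lemma header indicates. There is therefore no proof in the paper to compare your proposal against.

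Your outline is a reasonable sketch of the strategy one would expect in Wehrfritz's original argument: embed $H/GE^*$ into $\mathrm{Out}(G)$ via the conjugation map with kernel $E^*$, use the Amitsur-type structure of locally nilpotent subgroups of $D^*$ to see that the odd-primary torsion and the torsion-free part contribute only abelian outer automorphisms, and then case-split on $\mathrm{Out}(Q)$ for $Q$ locally cyclic or (locally) generalised quaternion. However, as you yourself acknowledge, the genuine content lies in the realisability analysis --- deciding exactly which outer automorphisms of $Q$ are induced by conjugation in $D^*$, and how the answer depends on whether $G=Q\cdot C_G(Q)$ --- and in the interaction bookkeeping, and your proposal stops short of carrying these out. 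In particular, the claim that the torsion-free part of $G$ contributes only abelian realisable outer automorphisms, and the passage from $\mathrm{Out}(Q_8)\cong\mathrm{Sym}(3)$ to the precise dichotomy between (ii) and (iii), are asserted rather than argued. If you want a complete proof, consult Wehrfritz's paper directly; for the purposes of the present paper the lemma is simply invoked as a black box.
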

\begin{lem}\label{p.f}
Let $D$ be an infinite division ring, $M$ a polycyclic-by-finite maximal subgroup
of $D^*$ and $G$ a nilpotent normal subgroup of $M$. Then $G$ is abelian and $F(G)^*\unlhd M$.
\end{lem}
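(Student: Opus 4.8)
The plan is to exploit the maximality of $M$ through the two normalizers $N_{D^*}(F(G)^*)$ and $N_{D^*}(G)$, both of which contain $M$. First I would observe that the assertion $F(G)^*\unlhd M$ is immediate: for $m\in M$, conjugation by $m$ is a ring automorphism of $D$ fixing $F=Z(D)$ pointwise and stabilising $G$ (since $G\unlhd M$), hence it stabilises the division subring $F(G)$, so $mF(G)^*m^{-1}=F(G)^*$. Thus the real content is that $G$ is abelian, and I may assume $D$ is noncommutative, since for a field $D^*$ is abelian. Because $M\le N_{D^*}(F(G)^*)$, maximality leaves two cases: $N_{D^*}(F(G)^*)=M$ or $N_{D^*}(F(G)^*)=D^*$.

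Suppose $N_{D^*}(F(G)^*)=M$. Every unit of the ring $F(G)$ normalizes $F(G)^*$, so $F(G)^*\le M$, hence $F(G)^*$ is polycyclic-by-finite; by Lemma~\ref{lem3} (applied with $n=1$ to the division ring $F(G)$) this forces $F(G)$ to be finite, hence a finite field, so $G\le F(G)^*$ is cyclic. Suppose instead $N_{D^*}(F(G)^*)=D^*$. Then the Cartan-Brauer-Hua theorem yields $F(G)\subseteq F$ or $F(G)=D$; in the first case $G$ is central and we are done, so the remaining case is $D=F(G)$.

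So assume $D=F(G)$. Here I would split on $N_{D^*}(G)\supseteq M$. If $N_{D^*}(G)=D^*$, then $G$ is a solvable normal subgroup of $D^*$, hence central by the classical theorem on solvable normal subgroups of division rings (cf.\ \cite[14.4.4]{scott}), so $G$ is abelian. The decisive case is $N_{D^*}(G)=M$, which I claim cannot occur. The key is a double-centralizer remark: every element of $F[G]$ commutes with every element of $C_D(G)$ (elements of $G$ do by definition, and $F$ is central), so $F[G]\subseteq C_D(C_D(G))$; the right-hand side is a division subring of $D$ and $D=F(G)$, so it equals $D$, giving $C_D(G)\subseteq Z(D)=F$, i.e.\ $C_D(G)=F$. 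Hence $F^*=C_{D^*}(G)\subseteq N_{D^*}(G)=M$, so $F^*$ is a subgroup of a polycyclic-by-finite group, hence finitely generated; since the multiplicative group of a field is finitely generated only when the field is finite, $F$ is finite. Now $Z(G)\subseteq C_D(G)\cap G=F^*\cap G$ is finite, while $G$ (a subgroup of the polycyclic-by-finite group $M$) is finitely generated and nilpotent; a finitely generated nilpotent group with finite centre is finite, so $G$ is finite, and then $F[G]$ is a finite subring of $D$, hence a finite division ring, hence a finite field, contradicting $D=F(G)$ noncommutative.

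This shows $G$ is abelian in all cases; then $F(G)$ is commutative and $F(G)^*$ is an abelian subgroup of $D^*$, which as noted is normalized by $M$. (In the decisive case $D=F(G)$, $N_{D^*}(G)=M$ one could instead invoke Lemma~\ref{wehr08} with $E=C_D(G)=F$, $H=N_{D^*}(G)=M$, $D=E(G)$ to control $M/GF^*$ and reach a contradiction via Lemma~\ref{a-by-f}, since $M$ cannot be abelian-by-finite.) The main obstacle, as I see it, is precisely this last case: the identity $C_D(G)=F$ is what converts the polycyclic-by-finite hypothesis into finiteness of $F$, and one still needs the standard but not-quite-obvious fact that a finitely generated nilpotent group with finite centre is finite.
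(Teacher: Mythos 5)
Your proof of the abelian part is correct, and in the decisive case $D=F(G)$, $N_{D^*}(G)=M$ it takes a genuinely different route from the paper. The paper splits on whether $M$ is absolutely irreducible, using \cite[Theorem~5.7.11]{skew} in one branch and, in the other, Wehrfritz's normalizer classification (Lemma~\ref{wehr08}) together with \cite[Theorem~3.7]{sol} and Lemma~\ref{a-by-f} to reach a contradiction. Your double-centralizer observation $C_D(G)=C_D(F(G))=F$, which places $F^*$ inside $M$ and hence forces $F$ to be finite, followed by ``finitely generated nilpotent with finite centre implies finite,'' yields a shorter, essentially self-contained contradiction ($F[G]=F(G)=D$ would be a finite field) and avoids all of that machinery, as well as the irreducibility dichotomy. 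Since that unproved fact is the hinge of your key case, give a reference or a two-line proof: $Z_2(G)/Z_1(G)$ embeds in $\mathrm{Hom}(G,Z_1(G))$, which is finite when $G$ is finitely generated and $Z_1(G)$ is finite, and then induct on the nilpotency class.

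There is, however, a genuine gap in your treatment of the second conclusion. You read $F(G)^*\unlhd M$ as ``$M$ normalizes $F(G)^*$'' and call it immediate, but what the lemma asserts (and what is used later, in Theorem~\ref{thm3}, where the quotient $M/F(G)^*$ and the factorization $M=M_1F(G)^*$ are formed) is that $F(G)^*$ is a subgroup of $M$, normal in $M$; normalization alone does not give this. Your first case $N_{D^*}(F(G)^*)=M$ does deliver the containment $F(G)^*\le M$ (this is exactly how the paper obtains it), and your remaining cases with $F(G)=D$ end in contradictions, so the only surviving case where containment is unaddressed is the Cartan--Brauer--Hua alternative $F(G)\subseteq F$; there you still must show $F(G)^*\le F^*\le M$, whereas your wrap-up claims only normalization. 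The containment does follow from your hypotheses: if $F^*\nsubseteq M$, then $MF^*=D^*$, and since $F^*$ is central $M$ is normal in $D^*$ with $D^*/M\simeq F^*/(F^*\cap M)$ abelian and simple, hence of prime order; then $D^*$ would be polycyclic-by-finite, contradicting Lemma~\ref{lem3}. Add this (or an equivalent argument) so that $F(G)^*\unlhd M$ is actually established in every case that survives your analysis.
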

\begin{proof}
Since $M\subseteq N_{D^*}(F(G)^*)$, we either have $F(G)^*\unlhd M$ or
(by the Cartan-Brauer-Hua theorem) $F(G)=D$. In the former case, as
the multiplicative group of the division ring $F(G)$ is polycyclic-by-finite,
$G$ is abelian.

We claim that $F(G)\neq D$. Assume on the contrary $F(G)=D$.
If $M$ is absolutely irreducible, then $M/C_M(G)$ is torsion by~\cite[Theorem~5.7.11]{skew}.
Since $C_M(G)\subseteq F^*$ (because of $F(G)=D$), we conclude that
$M$ is torsion over $F$ and therefore $F[G]=F(G)=D$, i.e., $G$ is absolutely irreducible.
Clearly, $Z(G)=G\cap F$; so $G/(G\cap F)$ is locally finite by~\cite[Theorem~5.7.11]{skew}.
This implies that $D$ is locally finite-dimensional over $F$.
Since $M$ is finitely generated, we may assume $M=\langle m_1,\ldots,m_s\rangle$.
So, $F[m_1,\ldots,m_s]=F[\langle m_1,\ldots,m_s\rangle]=D$ implies that $[D:F]<\infty$. This
conflicts \cite[Corollary~3]{f.g}.

Now, suppose $M$ is not absolutely irreducible,
so $F[M]^*=M$. Since $F(G)=D$, $F=C_D(G)$. On the other hand, $M\subseteq N_{D^*}(G)\subseteq D^*$.
If $N_{D^*}(G)=D^*$, then $G$ as a nilpotent normal subgroup of
$D^*$ is central, so $D=F$ which is impossible. So assume $M=N_{D^*}(G)$. Now we can
apply Lemma~\ref{wehr08}. Denote the maximal 2-subgroup of $G$ by $Q$.
If $Q$ is finite, then $F[Q]^*\subseteq F[M]^*=M$
implies that the multiplicative group of the division ring
$F[Q]$ is polycyclic-by-finite which asserts that $Q$ is abelian. Thus by~Lemma~\ref{wehr08}
we may assume that $M/GF^*$ is abelian. This gives us $M'\subseteq GF^*$ is nilpotent.
Therefore $M$ is soluble, so it is abelian by~\cite[Theorem 3.7]{sol}; this cannot happens
by~Lemma~\ref{a-by-f}. Therefore our claim, and so the statement of the lemma, holds.
\end{proof}
\begin{lem}\label{wehr07} {\rm(\cite[Proposition~4.1]{wehr07})}
Let $D=E(M)$ be a division ring generated as
such by its metabelian subgroup $M$ and its division subring $E$ such that $E\subseteq C_D(M)$.
Set $K=N_{D^*}(M)$, $G=C_M(M')$, $T$ the maximal periodic normal subgroup of $G$, $F=E(Z(G))$, $L=N_{F^*}(M)=K\cap F$.
Then
\begin{enumerate}
  \item [\rm(i)] if $M$ has a quaternion subgroup $Q$ of order $8$ with
$M=QC_M(Q)$, then $K=Q^+ML$;
  \vspace{-.18cm}
  \item [\rm(ii)] if $T$ is abelian and contains an element $x$ of order
$4$ not in the center of $G$, then $K=\langle 1+x\rangle ML$;
\vspace{-.18cm}
  \item [\rm(iii)]  in all other cases $K=ML$.
\end{enumerate}
\end{lem}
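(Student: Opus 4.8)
Lemma~\ref{wehr07} is quoted verbatim from \cite[Proposition~4.1]{wehr07}, so in the paper it is invoked as a black box and carries no proof of its own; what follows is how one would establish it directly. The plan is to study $K=N_{D^*}(M)$ through its action on the characteristic series $1\le M'\le G\le M$, where $G=C_M(M')$. Since $M$ is metabelian, $M'$ is abelian, hence $M'\le G$; consequently $G$ is normal in $M$ (as $M$ normalises $M'$ and hence its centraliser), $M/G$ is abelian (a quotient of $M/M'$), and $G$ is nilpotent of class at most $2$, because for $a,b\in G$ the commutator $[a,b]$ lies in $M'\le Z(G)$. The point of this reduction is that the only place where $K/ML$ can fail to be abelian is the $2$-torsion inside $G$, and that is exactly what produces the exceptional clauses (i) and (ii).

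The core step is to describe $N_{D^*}(G)$ modulo $G$ and its centraliser; this runs parallel to the analysis behind Lemma~\ref{wehr08}, and by Amitsur's classification of the finite and locally finite subgroups of division rings it comes down to the structure of the maximal periodic normal subgroup $T$ of $G$ and of its maximal $2$-subgroup $Q$. Two configurations are genuinely exceptional. If $Q$ is a quaternion group of order $8$ with $G=QC_G(Q)$, then $\operatorname{char} D=0$, the subring generated by $Q$ is a quaternion algebra, and the units $1+q$ for $q\in Q$ of order $4$ supply extra normalisers generating a finite group $Q^+$. If instead $T$ is abelian and contains an element $x$ of order $4$ not in $Z(G)$, then $x^2=-1$, the unit $1+x$ satisfies $(1+x)^2=2x\in GF^*$, and $1+x$ turns out to normalise $M$. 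In every other case $N_{D^*}(G)=G\cdot(C_D(G)^*\cap K)$. Verifying that these two configurations are the \emph{only} sources of extra normalisers, and pinning down $Q^+$ and $\langle 1+x\rangle$ precisely, is where essentially all of the effort lies, and it is the main obstacle.

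The final step promotes the description from $G$ to $M$. An element $g\in K$ normalises $G$, so after multiplying on the left by a suitable element of $M$ — and, in the two exceptional configurations, by an element of $Q^+$ respectively $\langle 1+x\rangle$ — we may assume $g$ centralises $G$, hence $Z(G)$, and induces an automorphism of $M$ that is trivial on $G$ and on $M/G$. Here the hypothesis $D=E(M)$ with $E\subseteq C_D(M)$ becomes decisive: conjugation by $g$ is an automorphism of $D$ determined by its effect on $M$, so a Cartan--Brauer--Hua / crossed-homomorphism argument forces that effect to be inner by an element of $M$ up to an element of $F^*=E(Z(G))^*$, i.e. $g\in M\cdot L$ with $L=N_{F^*}(M)=K\cap F$. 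Collecting the cases yields (i)--(iii). As indicated, the routine part is this lifting step; the $2$-group bookkeeping of the middle step is the real work, which is why it is legitimate to cite the result here rather than reprove it.
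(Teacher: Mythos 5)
You are right that the paper does not prove this statement at all: it is quoted verbatim from Wehrfritz \cite[Proposition~4.1]{wehr07} and used as a black box, so there is no internal proof to compare your argument against, and citing the source is exactly what the paper does. Judged as a proof, however, your proposal is an outline rather than an argument. The decisive middle step --- showing that the quaternion configuration $G=Q\cdot C_G(Q)$ with $|Q|=8$ and the configuration of an abelian $T$ containing an element $x$ of order $4$ outside $Z(G)$ are the \emph{only} sources of normalising units beyond $ML$, and identifying the extra normalisers precisely as $Q^{+}$ respectively $\langle 1+x\rangle$ --- is exactly the content of Wehrfritz's proposition, and you explicitly defer it (``the main obstacle''). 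An appeal to ``Amitsur's classification'' is not a substitute here, since $G$ need not be periodic and the relevant unit $1+x$ is not an element of any periodic subgroup; controlling such units is the real work in \cite{wehr07} and it is not reproduced or replaced in your sketch.

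The final lifting step also contains unjustified claims. If $g\in K$ centralises $G$, it does not follow without argument that conjugation by $g$ is trivial on $M/G$; and even granting an automorphism of $M$ trivial on $G$ and on $M/G$, the assertion that a ``Cartan--Brauer--Hua / crossed-homomorphism argument'' forces $g\in M\cdot L$ needs detail: one must show the centralising factor lies in $F^{*}$ with $F=E(Z(G))$ --- not merely in $C_{D}(G)^{*}$ --- and that it normalises $M$, which is where the hypotheses $D=E(M)$ and $E\subseteq C_D(M)$ must actually be deployed rather than invoked. So the proposal correctly identifies the shape of Wehrfritz's argument (analysis along the series $1\le M'\le G\le M$, exceptional $2$-torsion, then lifting), but both the classification of exceptional configurations and the reduction $K\subseteq Q^{+}ML$ (resp.\ $\langle 1+x\rangle ML$, $ML$) remain unproven; as a stand-alone proof it has a genuine gap, and the honest course --- which the paper itself takes --- is to cite \cite[Proposition~4.1]{wehr07}.
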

We are now ready to prove Theorem~B in the case $n=1$.
\begin{thm}\label{thm3}
Let $D$ be an infinite division ring and $M$ a maximal subgroup
of $D^*$. Then $M$ cannot be polycyclic-by-finite.
\end{thm}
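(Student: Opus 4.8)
The plan is to argue by contradiction, so suppose $M$ is polycyclic-by-finite. Two quick reductions come first: maximal subgroups of $D^{*}$ are infinite (for instance by \cite[Lemma~9]{akbari n}), so $M$ is infinite; and if $D$ were commutative then $M$, being a maximal subgroup of the abelian group $D^{*}$, would have prime index in $D^{*}$, whence $D^{*}$ would itself be polycyclic-by-finite, against Lemma~\ref{lem3}. So assume $D$ is non-commutative. Being polycyclic-by-finite, $M$ satisfies the maximal condition on subgroups and is soluble-by-finite; hence its soluble radical $R$ (the largest soluble normal subgroup, which exists by the maximal condition, as products of soluble normal subgroups are soluble) has finite index in $M$, so $R\neq 1$ since $M$ is infinite. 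Then $R$ contains a nontrivial characteristic abelian subgroup (the last nontrivial term of its derived series), so the Fitting subgroup $G:=\mathrm{Fit}(M)$ is nontrivial; and by the maximal condition together with Fitting's theorem, $G$ is nilpotent.

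Next I would feed the nilpotent normal subgroup $G$ into Lemma~\ref{p.f}: it yields that $G$ is abelian and that $K^{*}\unlhd M$, where $K:=F(G)$. Since $G$ is abelian and $F$ is central, $F\cup G$ is a commuting set, so $K$ is a (commutative) field with $F\subseteq K\subseteq D$. By the maximality of $M$ in $D^{*}$ we have $M\subseteq N_{D^{*}}(K^{*})\subseteq D^{*}$, so $N_{D^{*}}(K^{*})$ is either $D^{*}$ or $M$, and I would treat these two cases in turn.

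Suppose first $N_{D^{*}}(K^{*})=D^{*}$, so $K^{*}\unlhd D^{*}$. By the Cartan--Brauer--Hua theorem either $K\subseteq F$ or $K=D$; the second is impossible since $K$ is a field and $D$ is not commutative, so $K=F$ and $G\subseteq F^{*}$. Thus $G$ is central in $M$, and since always $Z(M)\subseteq\mathrm{Fit}(M)=G$ this gives $\mathrm{Fit}(M)=Z(M)$. Consequently $\overline{M}:=M/Z(M)$ is a polycyclic-by-finite group with trivial Fitting subgroup: a nontrivial nilpotent normal subgroup of $\overline{M}$ has nontrivial centre, which is normal in $\overline{M}$ and pulls back to a normal subgroup $C$ of $M$ with $Z(M)\lneq C$ and $C/Z(M)$ abelian; then $[C,C]\subseteq Z(M)$, so $[C,[C,C]]\subseteq[C,Z(M)]=1$, i.e.\ $C$ is nilpotent of class $\le 2$, whence $C\subseteq\mathrm{Fit}(M)=Z(M)$, a contradiction. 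A polycyclic-by-finite group with trivial Fitting subgroup has trivial soluble radical, hence is finite, so $Z(M)$ has finite index in $M$; thus $M$ is abelian-by-finite, contradicting Lemma~\ref{a-by-f}.

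The case $N_{D^{*}}(K^{*})=M$ is the one I expect to be the main obstacle, because here $M$ need not be abelian-by-finite and Lemma~\ref{a-by-f} is not directly available. In this case $K^{*}\le M$, so $K^{*}$ inherits the maximal condition from $M$; being abelian it is then finitely generated, which forces $K$, and hence $F$, to be a finite field. Therefore $G\subseteq K^{*}$ is finite, so $\mathrm{Fit}(M)$ is finite, and consequently \emph{every} nilpotent normal subgroup of $M$ is finite. I would then induct down the derived series $R=R^{(0)}\supseteq R^{(1)}\supseteq\cdots\supseteq R^{(d)}=1$ of the soluble radical to show $R$ finite: given $R^{(j+1)}$ finite, the subgroup $C:=C_{R^{(j)}}(R^{(j+1)})$ is normal in $M$ (both $R^{(j)}$ and $R^{(j+1)}$ being characteristic in $R\unlhd M$), has finite index in $R^{(j)}$ (as $R^{(j)}/C$ embeds in the finite group $\mathrm{Aut}(R^{(j+1)})$), and satisfies $[C,C]\subseteq R^{(j+1)}$ with $[C,R^{(j+1)}]=1$, so it is nilpotent of class $\le 2$; being a nilpotent normal subgroup of $M$ it is finite, hence so is $R^{(j)}$. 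Since $R$ has finite index in $M$, it follows that $M$ is finite, contradicting that $M$ is infinite. Either way we reach a contradiction, which proves the theorem. The substantive input above is Lemma~\ref{p.f}, whose own proof rests on the classification of normalisers of locally nilpotent subgroups of division rings (Lemma~\ref{wehr08}); granting it, the argument is essentially finite group theory together with the Cartan--Brauer--Hua theorem.
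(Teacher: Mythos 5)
Your proof is correct, but it follows a genuinely different route from the paper's. Both arguments pivot on Lemma~\ref{p.f}, but you apply it once, to the Fitting subgroup $\mathrm{Fit}(M)$ (nontrivial and nilpotent by the maximal condition), and then split on the dichotomy $N_{D^*}(F(\mathrm{Fit}(M))^*)\in\{M,D^*\}$ forced by maximality: in the first branch the Cartan--Brauer--Hua theorem gives $\mathrm{Fit}(M)=Z(M)$, and a purely group-theoretic argument (trivial Fitting subgroup of $M/Z(M)$, hence trivial soluble radical, hence finiteness) shows $M$ is centre-by-finite, contradicting Lemma~\ref{a-by-f}; in the second branch $F(\mathrm{Fit}(M))^*$ is a finitely generated abelian subgroup of $M$, so $F(\mathrm{Fit}(M))$ is a finite field, making $\mathrm{Fit}(M)$ finite, and your centralizer/automorphism induction down the derived series of the soluble radical forces $M$ finite, again a contradiction. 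The paper instead works down the derived series of a polycyclic normal subgroup $H$ of finite index to produce a non-abelian metabelian normal subgroup $M_1$ with $F(M_1)=D$, applies Lemma~\ref{p.f} to $C_{M_1}(M_1')$, and then invokes Wehrfritz's description of normalizers of metabelian subgroups of division rings (Lemma~\ref{wehr07}), finishing the last case with Lemma~\ref{metabelian}. What your approach buys is that, granted Lemma~\ref{p.f} (and hence Lemma~\ref{wehr08} indirectly), the remainder is elementary polycyclic group theory plus Cartan--Brauer--Hua, with no appeal to Lemma~\ref{wehr07} or Lemma~\ref{metabelian}; the paper's version is shorter once one is willing to quote the stronger normalizer classification. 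Your preliminary reductions (infinitude of $M$ via \cite[Lemma~9]{akbari n}, and the commutative case via prime index and Lemma~\ref{lem3}) are sound, and the standard facts you use (maximal condition, nilpotency of the Fitting subgroup, a field with finitely generated multiplicative group is finite) are all correct.
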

\begin{proof}
Let $M$ be a polycyclic-by-finite group.  We have a series of the form
$$1=H^{(s)}\lhd\cdots\lhd H'\lhd H\lhd M,$$
where $M/H$ is a finite group. By~Lemma~\ref{a-by-f}, $H$ is non-abelian.
Set $H^{(0)}=H$, and let $r$ be the largest integer such that
$H^{(r)}\nsubseteq F$, and so $H^{(r+1)}\subseteq F$. Note that $H^{(r)}$ is a nilpotent
normal subgroup of $M$, so $H^{(r)}$ is abelian by~Lemma~\ref{p.f}. Let $M_1:=H^{(r-1)}$
which is a (non-abelian) metabelian normal subgroup of $M$. Since $M\subseteq N_{D^*}(F(M_1))$,
we have $F(M_1)=D$. If we set $G=C_{M_1}(M_1')$, then clearly $G$ is a nilpotent normal
subgroup of $M$; thus by~Lemma~\ref{p.f}, $G$ is abelian and $F(G)^*\unlhd M$.
Now, by~Lemma~\ref{wehr07}, we have the following three cases to consider:
\begin{enumerate}
  \item [\rm(i)] $M_1=QC_{M_1}(Q)$. Then $C_{M_1}(Q)\lhd M_1$ and hence we conclude
that $M_1/C_{M_1}(Q)\simeq Q/(Q\cap C_{M_1}(Q))=Q/Z(Q)$
is abelian. Thus, $M_1'\subseteq C_{M_1}(Q)$ and so
$Q\subseteq C_{M_1}(M_1')=G$, which is a contradiction since $G$ is abelian.
  \vspace{-.18cm}
  \item [\rm(ii)] The case (ii) of~Lemma~\ref{wehr07} cannot occur since $G$ is abelian.
\vspace{-.18cm}
  \item [\rm(iii)] $M=M_1F(G)^*$. In this case $M/F(G)^*\simeq M_1/(F(G)^*\cap M_1)$ is abelian because
$M_1'\subseteq F(G)^*\cap M_1$, and hence $M'\subseteq F(G)^*$ is abelian; consequently
$M$ is abelian by~Lemma~\ref{metabelian}. Since $M_1$ was non-abelian, we arrive at a contradiction.
\end{enumerate}
The proof of the theorem is completed.
\end{proof}
Finally we assert a theorem which completes the proof of Theorem~B.
\begin{thm}\label{poly2}
Let $D$ be an infinite division ring and $M$ a maximal subgroup
of $\mathrm{GL}_n(D)$, $n\geq 2$. If the center of $D$ contains at least five elements,
then $M$ cannot be polycyclic-by-finite.
\end{thm}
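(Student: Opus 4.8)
The plan is to reduce to the division ring case already handled in Theorem~\ref{thm3}, following the same pattern used in the nilpotent setting (Theorem~\ref{nil2}). First I would invoke \cite[Corollary~3]{f.g} to assume that $D$ is infinite-dimensional over its center $F$, since otherwise $\mathrm{GL}_n(D)$ is finitely generated and the result is immediate from Lemma~\ref{lem3} (a polycyclic-by-finite maximal subgroup would itself be finitely generated, and one would derive finiteness of $D$). Next, set $R := F[M]$. Because $M \subseteq R^* \cap \mathrm{GL}_n(D)$ and $M$ is maximal in $\mathrm{GL}_n(D)$, exactly one of the following holds: either $R^* \cap \mathrm{GL}_n(D) = M$, or $R^* \cap \mathrm{GL}_n(D) = \mathrm{GL}_n(D)$, i.e.\ $\mathrm{SL}_n(D) \subseteq M$ and hence (by Cartan--Brauer--Hua for matrix rings) $R = M_n(D)$.

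In the first case, $M$ is a normal subgroup of $R^*$. If $M$ is reducible, then by \cite[Lemma~1]{polynomial} it contains an isomorphic copy of $D^*$, forcing $D$ to be a field and contradicting noncommutativity (or, if $D$ is a field from the start, one argues directly that $\mathrm{GL}_n(F)$ cannot be polycyclic-by-finite via Lemma~\ref{lem3} applied to a block). So $M$ is irreducible, $R$ is prime by \cite[1.1.14]{skew}, and $R$ is Goldie by Lemma~\ref{gold1} since polycyclic-by-finite groups are (locally) soluble. Since the $Z(R)$-subalgebra of $R$ generated by $R^*$ is all of $R$, Lemma~\ref{lanski} applies: either $R$ is a domain or $M \subseteq Z(R)$. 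The latter makes $M$ abelian, contradicting Lemma~\ref{a-by-f}. So $R$ is an Ore domain; its classical quotient ring $\Delta$ is a division ring inside $M_n(D)$ by \cite[Theorem~5.7.8]{skew}. If $\Delta^* \cap \mathrm{GL}_n(D) = \mathrm{GL}_n(D)$, then Cartan--Brauer--Hua gives $\Delta = M_n(D)$, impossible for $n \geq 2$; hence $M = \Delta^* \cap \mathrm{GL}_n(D)$ is a maximal subgroup of $\Delta^*$ that is polycyclic-by-finite, contradicting Theorem~\ref{thm3}.

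In the second case $R = M_n(D)$, so by \cite[Theorem~5.7.11]{skew} $M$ is center-by-(locally finite); since $Z(M) = M \cap F^*$, the quotient $M/(M \cap F^*)$ is locally finite, and because $M$ is finitely generated this locally finite quotient is actually finite, so $M$ is abelian-by-finite (indeed center-by-finite). This directly contradicts Lemma~\ref{a-by-f}, which is where the hypothesis $n \geq 2$ and the five-element condition on the center get used (Lemma~\ref{a-by-f} is stated for all $n$, but in its proof it relies on the maximal-subgroup machinery; the five-element hypothesis enters through Lemma~\ref{lanski} in Case~1). I expect the main obstacle to be bookkeeping around the case $D$ commutative versus noncommutative: when $D = F$ is an infinite field, Lemma~\ref{lanski} and \cite[Lemma~1]{polynomial} are not the right tools, and one instead argues that $\mathrm{SL}_n(F) \subseteq M$ cannot happen (by Lemma~\ref{lem3}-type finiteness obstructions, since $\mathrm{SL}_n(F)$ is not polycyclic-by-finite for an infinite field $F$) while a maximal subgroup not containing $\mathrm{SL}_n(F)$ is reducible and thus contains a copy of $\mathrm{GL}_k(F) \times \mathrm{GL}_{n-k}(F)$-type data, again non-polycyclic-by-finite. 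Assembling both sub-cases cleanly, so that every branch terminates in a contradiction with Lemma~\ref{lem3}, Lemma~\ref{a-by-f}, or Theorem~\ref{thm3}, is the delicate part of the write-up.
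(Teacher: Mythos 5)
Your overall skeleton (dichotomy on whether $M$ is absolutely irreducible or $M=F[M]^*$, then Lanski plus an Ore quotient division ring in the second branch) does match the paper, but two of your key steps rest on the false belief that polycyclic-by-finite groups are (locally) soluble: any finite insoluble group (e.g.\ one containing $A_5$) is polycyclic-by-finite, and a polycyclic-by-finite group need not be locally soluble. This invalidates your appeal to Lemma~\ref{gold1} (which requires a locally soluble group) to make $F[M]$ Goldie, and also your application of Lemma~\ref{lanski} with $N=M$ (Lanski requires a \emph{soluble} normal subgroup of $R^*$). The paper gets around both points differently: $F[M]$ is a homomorphic image of the group algebra $FM$, which is Noetherian by Hall's theorem since $M$ is polycyclic-by-finite, and a Noetherian domain is automatically Ore; and Lanski is applied to the polycyclic (hence soluble) normal subgroup of finite index in $M=F[M]^*$ — if that subgroup is central then $M$ is abelian-by-finite, which Lemma~\ref{a-by-f} forbids, so $F[M]$ is a domain. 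These repairs are standard but are genuinely missing from your write-up.

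The absolutely irreducible case has a second real gap: \cite[Theorem~5.7.11]{skew} is a statement about (locally) nilpotent absolutely irreducible groups — that is exactly how it is used in Theorem~\ref{nil2} and Lemma~\ref{p.f} — so it cannot yield ``centre-by-(locally finite)'' for a polycyclic-by-finite $M$, which need not be locally nilpotent. The paper instead invokes \cite[Theorem~1~(i)]{j.algebra}: an absolutely irreducible (locally soluble)-by-(locally finite) maximal subgroup of $\mathrm{GL}_n(D)$ is abelian-by-finite, which applies since polycyclic-by-finite groups are soluble-by-finite, and Lemma~\ref{a-by-f} then finishes; without some such result your Case~2 does not go through. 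Finally, a smaller but real slip at the end of your Case~1: maximality forces $M=\Delta^*$ itself, so $M$ is not a (proper) maximal subgroup of $\Delta^*$ and Theorem~\ref{thm3} is not the right tool; the contradiction comes from Lemma~\ref{lem3} if $\Delta$ is infinite, and from the impossibility of $M$ being finite or abelian (\cite[Lemma~9]{akbari n}, Lemma~\ref{a-by-f}) if $\Delta$ is finite — indeed the paper's proof of this theorem never routes through Theorem~\ref{thm3} at all. Your opening reduction via \cite[Corollary~3]{f.g} is also mis-stated (that corollary does not say $\mathrm{GL}_n(D)$ is finitely generated in the finite-dimensional case), but it is harmless because the reduction is already built into Lemma~\ref{a-by-f}.
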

\begin{proof}
Let $M$ be a polycyclic-by-finite group. If $M$ is absolutely irreducible,
$M$ is abelian-by-finite by~\cite[Theorem~1~(i)]{j.algebra}. This cannot happens by~Lemma~\ref{a-by-f}.
Now let $F[M]^*=M$. Since $M$ is polycyclic-by-finite, $F[M]$ is a Noetherian ring by
a result of Hall (see, e.g., \cite[p.~35]{wehr2009}). On the other hand, by~Lemmas~\ref{lanski} and \ref{a-by-f},
$F[M]$ is a domain. Then, $F[M]$ is in fact an Ore domain. Therefore, the
classical quotient ring of $F[M]$ is a division ring $\Delta$ which by~\cite[Theorem~5.7.8]{skew},
$\Delta$ is contained in $M_n(D)$. Since $n\geq 2$, maximality of $M$ in $\mathrm{GL}_n(D)$
implies that $M=F[M]^*=\Delta^*$, and so $M$ is abelian. This, again, contradicts Lemma~\ref{a-by-f}.
\end{proof}
We close this paper by some remarks. Firstly, from our proofs of Theorems~\ref{nil2} and \ref{poly2},
we see that in those theorems it is enough that (instead of the center of $D$) the center of
$F[M]$ contains at least five elements. Secondly, by the work in section~3 we see that the statement of
Theorem~B (instead of polycyclic-by-finite groups) holds also for finitely generated
soluble-by-finite groups:\vspace{.3 cm}\\
{\bf Theorem~B$'$.} {\it Let $D$ be an infinite division ring, $n$ a natural number and
$M$ a maximal subgroup of $\mathrm{GL}_n(D)$. If $n=1$ or the center of $F[M]$ contains at least
five elements, then $M$ cannot be finitely generated soluble-by-finite.}\vspace{.3 cm}

Finally, an example of a finitely generated soluble skew linear group is given in~\cite[p.~128]{skew}.
Thus the condition that $M$ is a maximal subgroup of $\mathrm{GL}_n(D)$ in Theorem~B$'$ is essential.\\


\noindent{\bf Acknowledgments.} The research of the second author
was in part supported by a grant from IPM (Grant No. 91050220).
The second author also thanks National Foundation of Elites for
financial support.
\small
{}

\end{document}